\documentclass[10pt,journal,epsfig]{IEEEtran}

\usepackage{bm}

\usepackage{graphicx}
\usepackage{amssymb}
\usepackage{cite}
\usepackage{subfigure}
\usepackage{amsmath}
\usepackage{algorithm}
\usepackage{algorithmic}
\usepackage{multirow}
\usepackage{color}

\begin{document}

\title{Proximal-Free ADMM for Decentralized Composite Optimization via Graph Simplification}

\author{Bin Wang, Jun Fang,~\IEEEmembership{Member,~IEEE},
Huiping Duan and Hongbin Li,~\IEEEmembership{Senior Member,~IEEE}
\thanks{Bin Wang, Jun Fang are with the National Key Laboratory
of Science and Technology on Communications, University of
Electronic Science and Technology of China, Chengdu 611731, China,
Email: JunFang@uestc.edu.cn}
\thanks{Huiping Duan is with the School of Information and Communication Engineering,
University of Electronic Science and Technology of China, Chengdu
611731, China, Email: huipingduan@uestc.edu.cn}
\thanks{Hongbin Li is with the Department of Electrical and Computer Engineering,
Stevens Institute of Technology, Hoboken,
NJ 07030, USA, Email:  Hongbin.Li@stevens.edu}
\thanks{This work was supported in part by the National Science
Foundation of China under Grant 61522104.}}

\maketitle

\begin{abstract}
We consider the problem of decentralized composite optimization
over a symmetric connected graph, in which each node holds its own
agent-specific private convex functions, and communications are
only allowed between nodes with direct links. A variety of
algorithms have been proposed to solve such a problem in an
alternating direction method of multiplier (ADMM) framework. Many
of these algorithms, however, need to include some proximal term
in the augmented Lagrangian function such that the resulting
algorithm can be implemented in a decentralized manner. The use of
the proximal term slows down the convergence speed because it
forces the current solution to stay close to the solution obtained
in the previous iteration. To address this issue, in this paper,
we first introduce the notion of \emph{simplest bipartite graph},
which is defined as a bipartite graph that has a minimum number of
edges to keep the graph connected. A simple two-step message
passing-based procedure is proposed to find a simplest bipartite
graph associated with the original graph. We show that the
simplest bipartite graph has some interesting properties. By
utilizing these properties, an ADMM without involving any proximal
terms can be developed to perform decentralized composite
optimization over the simplest bipartite graph. Simulation results
show that our proposed method achieves a much faster convergence
speed than existing state-of-the-art decentralized algorithms.
\end{abstract}

\begin{keywords}
Decentralized composite optimization, proximal-free ADMM, simplest
bipartite graph.
\end{keywords}


\section{Introduction}
A network with multiple agents is called a multi-agent network,
which can be described by a graph. In a multi-agent network, each
agent is equipped with sensing, communication and computing
abilities, such that the agents are able to collaboratively
accomplish computational tasks \cite{OlfatiFax07}. Among the
various tasks that a multi-agent system can undertake, distributed
optimization is of significant importance. There are basically two
different approaches for distributed optimization, i.e.
centralized and decentralized approaches. A centralized approach
works in a center-local (master-worker) fashion
\cite{BoydParikh11}, namely, local agents (workers) are only
connected with the center agent (master). In each iteration, each
local agent sends their local data to the center agent, and the
center agent sends the processed data back to each local agent.
This operation mode brings a heavy burden on the center agent
because the center agent needs to communicate with all local
agents. Another approach is the so called decentralized methods
which has attracted much attention over the past few years. For
decentralized methods, a center agent is no longer needed. Each
agent has only access to the information of its neighboring nodes,
namely, communications are only allowed among neighboring nodes.
Such a decentralized operation involves a very low communication
cost for each agent. In addition, decentralized methods are robust
to communication disruption, node failure or malfunctioning. Due
to these attractive merits, decentralized methods have found
applications in various fields, including information processing
over sensor networks \cite{LingTian10}, aircraft or vehicle
networks \cite{RenBeard07}, cooperative spectrum sensing in
cognitive radios \cite{MengYin11}, monitoring and optimization of
smart grids \cite{GiannakisGatsis13}, distributed control of
networked robots \cite{ZhouRoumeliotis11}, distributed machine
learning \cite{ForeroCano10, FanSundaram13} and wireless
communications \cite{GiannakisLing16}.

The research on decentralized optimization originates from 1980s
\cite{TsitsiklisBertsekas86, Bertsekas83}, the time when
large-scale networks emerged. Many earlier methods, such as the
incremental subgradient methods \cite{NedicBertsekas01a,
NedicBertsekas01b} and the incremental proximal methods
\cite{Bertsekas11}, are only applicable to a special ring type
network, which is restrictive in real applications. The recently
proposed method \cite{GurbuzbalabanOzdaglar17} also follows this
line. To accommodate general networks, a distributed subgradient
method \cite{NedicOzdaglar09} and its variants \cite{RamNedic10,
Nedic11} were proposed. Though simple and easy to implement, these
algorithms are very slow due to the use of a diminishing step
size.

Many efforts have been made to develop decentralized algorithms
with fixed step sizes. Generally, these studies can be divided
into second-order methods \cite{BajovicJakovetic15,
MokhtariLing17, MokhtariShi16} and first-order methods
\cite{MateosBazerque10, ShiLing14, ChangHong14, XuZhu16,
MengFazel15, ShiLing15b, WangFang18,ShiLing15a}. Although the
second-order methods have a fast convergence rate, they usually
incur a high computational complexity since they need to compute
the inverse of the Hessian matrix of the objective function.
Besides, second-order methods require the objective function to be
twice differentiable, which may not be satisfied for many
optimization problems. Compared with second-order methods,
first-order methods are often considered more preferable due to
their simplicity and low computational complexity. Specifically,
since consensus among local variables can be formulated as a
linear constraint, decentralized first-order methods can be
naturally developed in an ADMM or an augmented Lagrangian
multiplier (ALM) framework. In \cite{MateosBazerque10},
decentralized ADMM algorithms were proposed for solving the sparse
LASSO problem, in which a node-wise consensus formulation that
enforces consensus for each pair of nodes with direct links is
used. Based on a same consensus formulation, the work
\cite{ShiLing14} addressed a general single-objective optimization
problem, and the works \cite{ChangHong14,XuZhu16} considered
decentralized composite optimization problems with a
smooth+nonsmooth structure placed on the objective functions. In
some other works, e.g.
\cite{MengFazel15,ShiLing15b,WangFang18,ShiLing15a}, a different
consensus formation was employed by resorting to the Laplacian
matrix of the underlying graph. To enable decentralized
implementation, in these works
\cite{MengFazel15,ShiLing15b,WangFang18,ShiLing15a}, an extra
proximal term expressed as
\begin{align}
\|\boldsymbol{x}-\boldsymbol{x}(k)\|_{\boldsymbol{W}}^2=(\boldsymbol{x}-\boldsymbol{x}(k))^T\boldsymbol{W}
(\boldsymbol{x}-\boldsymbol{x}(k))
\end{align}
has to be included in the augmented Lagrangian function to
eliminate the nonseparable term that demands centralized
processing. Here $\boldsymbol{x}$ denotes the optimization
variable, $\boldsymbol{x}(k)$ represents the solution obtained in
the previous iteration, and $\boldsymbol{W}$ is a carefully chosen
positive semi-definite matrix. Although facilitating decentralized
implementation, this proximal term has the disadvantage of slowing
down the convergence speed because it forces the current solution
to stay close to the solution obtained in the previous iteration.

To further improve the convergence speed of the decentralized
ADMM, in this paper, we propose a proximal-free ADMM for
decentralized composite optimization problems by converting the
underlying graph into a simplest bipartite graph. Here the
simplest bipartite graph is defined as a bipartite graph that has
a minimum number of edges to keep the graph connected. A simple
two-step message passing-based procedure is developed to find a
simplest bipartite graph associated with the original graph. By
utilizing the properties of the simplest bipartite graph, an ADMM
without involving any extra proximal terms can be developed. The
proposed algorithm exhibits a faster convergence speed than
existing state-of-the-art decentralized algorithms. Also, since
our proposed method operates over a simplest bipartite graph with
a minimum number of edges, the amount of data to be exchanged
(among nodes) is considerably reduced compared with other methods.
Such a merit makes our proposed algorithm particularly suitable
for networks subject to stringent power and communication
constraints.

The rest of this paper is organized as follows. We first introduce
the basic assumptions and the decentralized composite optimization
problem in Section \ref{sec:problem-formulation}. We then develop
a proximal-free ADMM for single-objective optimization problems in
Section \ref{sec:PF-ADMM-Single}. In Section \ref{sec:find-SBG}, a
simple two-step message passing procedure is proposed to find a
simplest bipartite graph associated with the original graph. In
Section \ref{sec:PF-ADMM-composite}, we extend our proposed method
to decentralized composite optimization problems. Simulations
results are provided in Section \ref{sec:simulation-results},
followed by concluding remarks in Section \ref{sec:conclusion}.

\section{Problem Formulation} \label{sec:problem-formulation}
\label{sec:prob} Consider a bidirectionally connected network
consisting of $l$ nodes and $m$ edges. The network is described by
a symmetric directed graph $G = \left\{ {V,E} \right\}$, where $V$
is the set of nodes and $E$ is the set of edges. At each
iteration, every node communicates with its neighboring nodes. The
communication is assumed to be synchronized. In this paper, we
consider a decentralized composite optimization problem expressed
as
\begin{align}
\mathop {\min }\limits_{\boldsymbol{x}} \sum\limits_{i = 1}^l
{{f_i}\left( \boldsymbol{x} \right) + {g_i}\left( \boldsymbol{x}
\right)} \label{opt1}
\end{align}
where $\boldsymbol{x} \in {\mathbb{R}^n}$ is the optimization
variable shared by all the objective functions, ${f_i},
{g_i}:\mathbb{R}^{n}\rightarrow \mathbb{R}\cup \{\infty\}$ are
proper, lower semicontinuous convex functions (possibly
nondifferentiable), only known to the $i$th agent. We have the
following assumption regarding the functions ${f_i}$ and ${g_i}$.

\newtheorem{assumption}{Assumption}
\begin{assumption}
The proximal mapping of $g_i,\forall i$, defined as
\begin{align}
\text{prox}_{g_i}(\boldsymbol{y})\triangleq\arg
\min_{\boldsymbol{x}} g_i(\boldsymbol{x}) +
\frac{1}{2}\|\boldsymbol{x} - \boldsymbol{y}\|_2^2
\end{align}
has a closed-form solution. For $f_i,\forall i$, either its
proximal mapping has a closed-form solution or it is a smooth
convex function with Lipschitz continuous gradient, i.e.
\begin{align}
{\left\| {\nabla {f_i}\left( \boldsymbol{x} \right) - \nabla
{f_i}\left( \boldsymbol{y} \right)} \right\|_2} \le {L_f}{\left\|
{\boldsymbol{x} - \boldsymbol{y}} \right\|_2},
 \quad \forall \boldsymbol{x},\boldsymbol{y}
\end{align}
where $L_f$ is the Lipschitz constant. To avoid triviality, it is
assumed that the proximal mapping of $f_i+g_i$ does not have a
closed-form solution.
\end{assumption}

Assume each agent $i$ holds a local copy ${\boldsymbol{x}_i} \in
\mathbb{R}^n$ of the global variable $\boldsymbol{x}$ in problem
(\ref{opt1}). Define
\begin{align}
f( {\tilde {\boldsymbol{x}}})\triangleq\sum\limits_{i = 1}^l
{{f_i}\left( {{\boldsymbol{x}_i}} \right)}\quad g( {\tilde
{\boldsymbol{x}}}) \triangleq\sum\limits_{i = 1}^l {{g_i}\left(
{{\boldsymbol{x}_i}} \right)}
\end{align}
where $\tilde {\boldsymbol{x}} \buildrel \Delta \over = \left[
\boldsymbol{x}_1^T\phantom{0}\boldsymbol{x}_2^T\phantom{0} \ldots
\phantom{0}\boldsymbol{x}_l^T\right]^T \in \mathbb{R}^{nl}$ is a
stacked column vector. We say that $\tilde {\boldsymbol{x}}$ is
consensual if ${\boldsymbol{x}_1} = {\boldsymbol{x}_2} =  \ldots =
{\boldsymbol{x}_l}$. Thus problem (\ref{opt1}) can be reformulated
as
\begin{align}
\min_{\tilde {\boldsymbol{x}}} & \quad f( {\tilde
{\boldsymbol{x}}} ) + g( {\tilde
{\boldsymbol{x}}}) \nonumber\\
\text{s.t.} & \quad \tilde {\boldsymbol{L}}\tilde {\boldsymbol{x}}
= \boldsymbol{0} \label{opt2}
\end{align}
where $\tilde{\boldsymbol{L}}\triangleq\boldsymbol{L}\otimes
{\boldsymbol{I}_n}$, $\otimes$ denotes the Kronecker product,
${\boldsymbol{I}_n} \in \mathbb{R}^{n \times n}$ is an identity
matrix, $\boldsymbol{L} \in \mathbb{R}^{l \times l}$ is the
Laplacian matrix of the graph $G$ defined as
\begin{align}
 L_{i,j} = \left\{ \begin{array}{l}
\deg(V_i), \text{if $i = j$}\\
-1, \text{$i \ne j$, node $i$ and $j$ are connected}\\
0, \text{otherwise}
\end{array} \right. \label{Laplacian-matrix}
\end{align}
in which $\deg(V_i)$ is the degree of the $i$th node. Since
$\text{null}(\boldsymbol{L})=\text{span}\{\boldsymbol{1}\}$, we
know that the variable ${\tilde {\boldsymbol{x}}}$ is consensual
if the constraint in problem (\ref{opt2}) is satisfied.

\begin{figure*}[!t]
\centering
\begin{tabular}{cc}
\includegraphics[width=4.5cm,height=4cm]{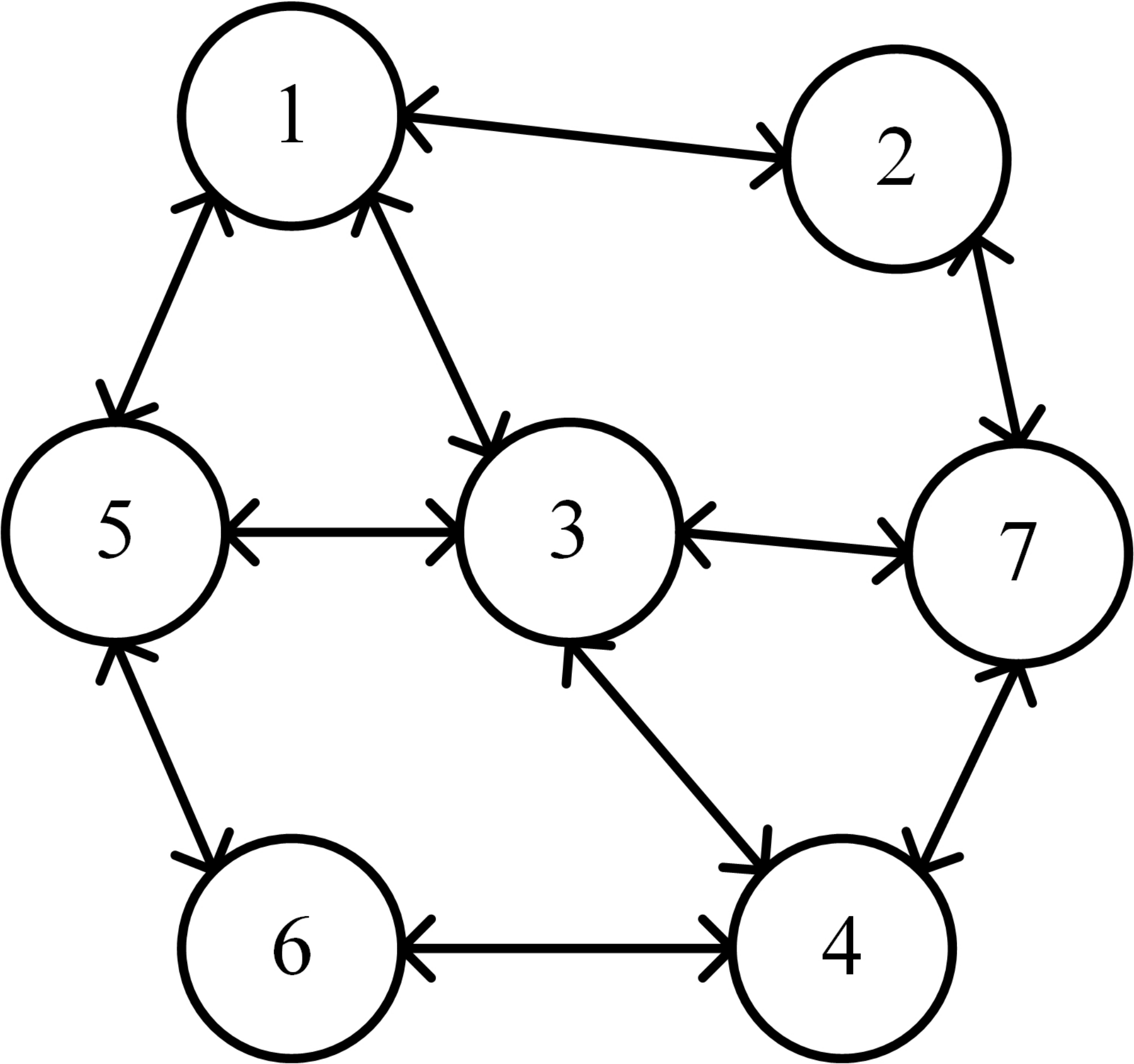}
\hspace{15pt}
\includegraphics[width=4.5cm,height=4cm]{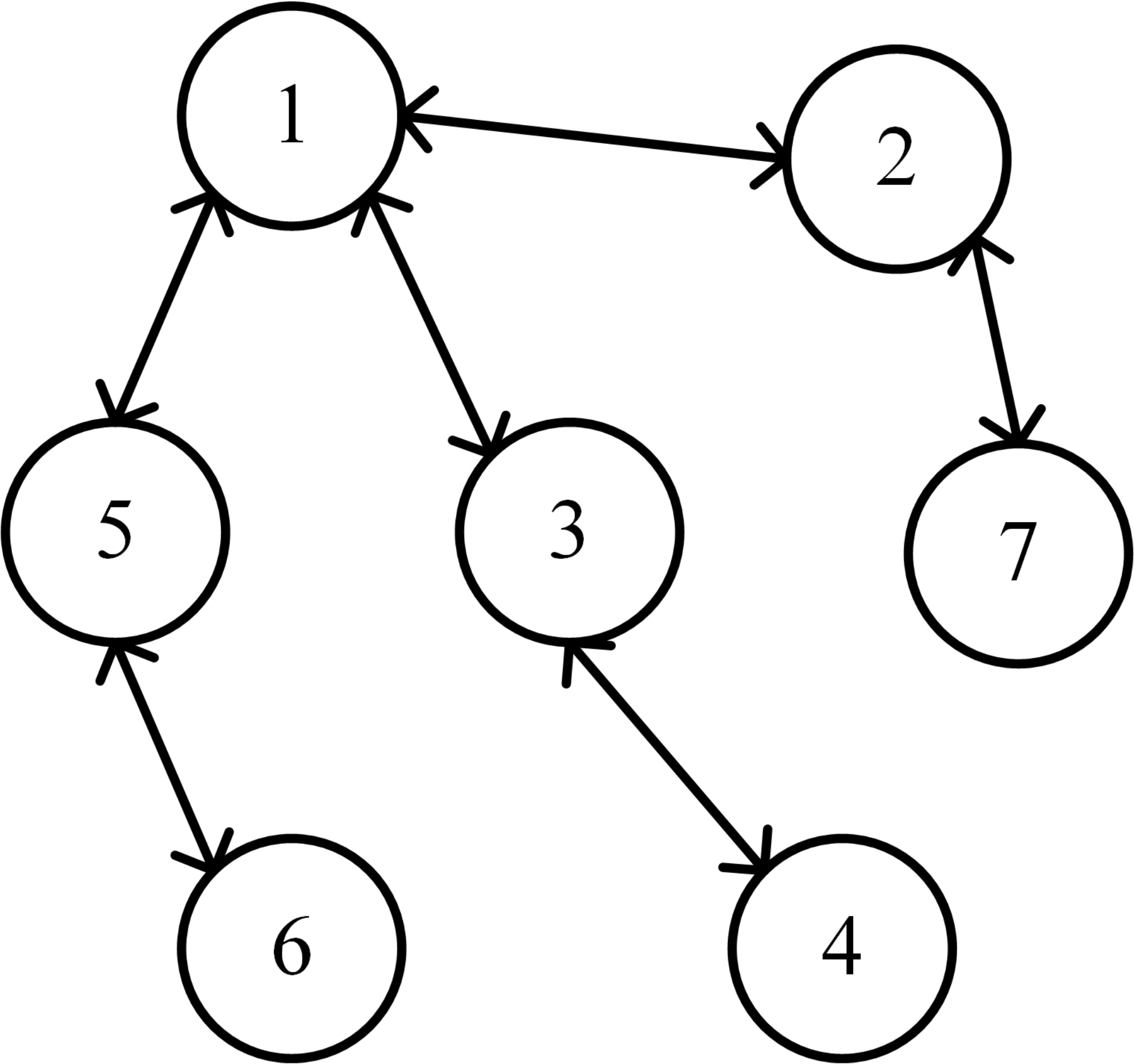}
\hspace{15pt}
\includegraphics[width=4.5cm,height=4cm]{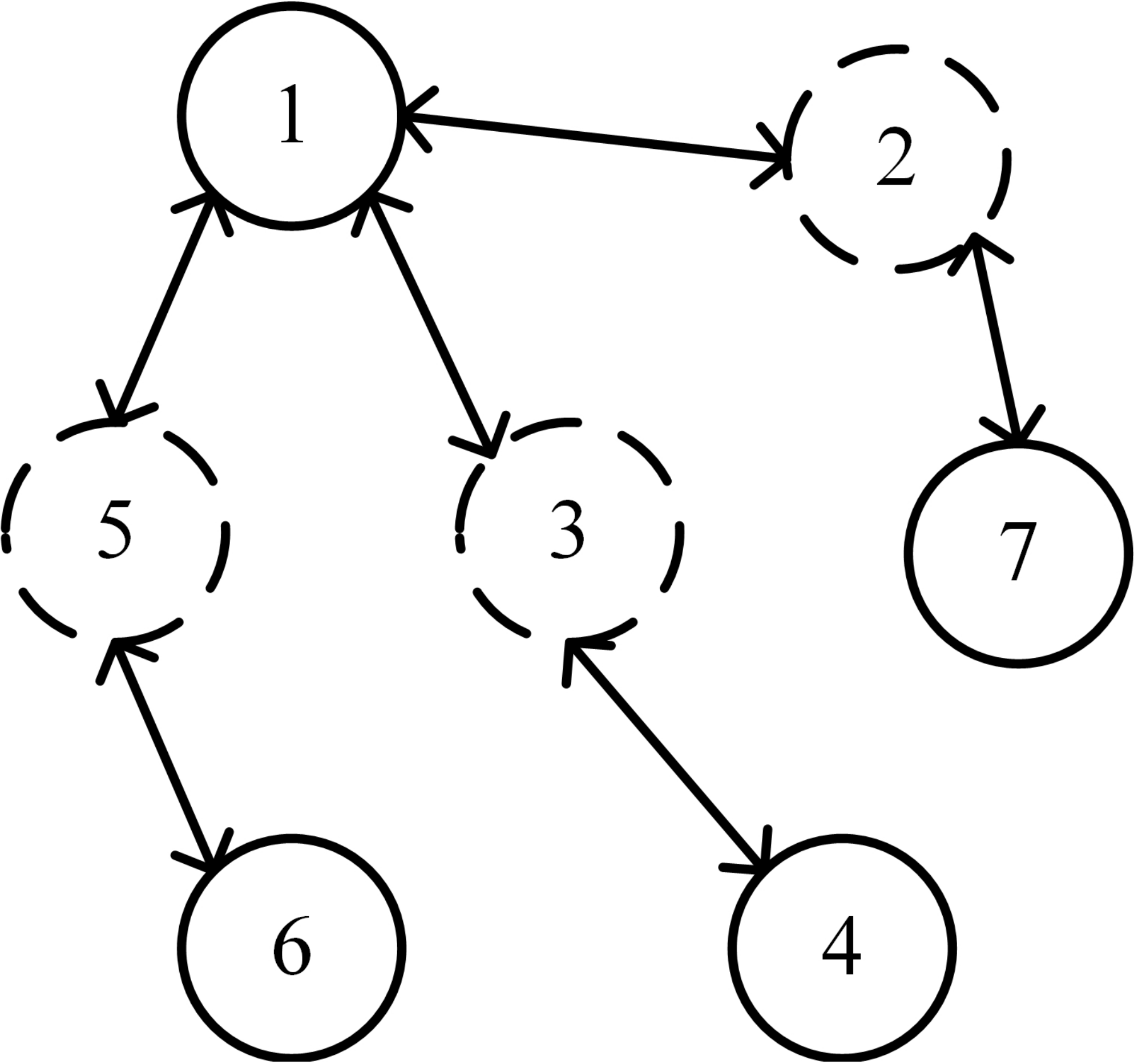}
\end{tabular}
\caption{Left: The original graph; Middle: The corresponding
minimum spanning tree; Right: The simplest bipartite graph (nodes
belonging to the set $H$ are plotted with solid line, while those
in the set $T$ are plotted with dashed line)} \label{fig:sbg}
\end{figure*}

\section{Proximal-Free ADMM: Single-Objective Optimization} \label{sec:PF-ADMM-Single}
\label{sec:basic} To more clearly explain the idea of our proposed
algorithm, we start by discussing a single-objective optimization
problem, which is a special case of the composite optimization
problem. The single-objective problem is formulated as
\begin{align}
\label{basic:1}
\min_{\tilde {\boldsymbol{x}}} & \quad f(\tilde
{\boldsymbol{x}} ) \nonumber\\
\text{s.t.} & \quad \tilde{\boldsymbol{L}}\tilde {\boldsymbol{x}} =
\boldsymbol{0}
\end{align}
Before proceeding, we first introduce the notion of simplest
bipartite graph. Here \emph{the simplest bipartite graph} is
defined as a bipartite graph that has a minimum number of edges
keeping the graph connected (see Fig. \ref{fig:sbg}). More
precisely, given a graph $G=\{V,E\}$, a simplest bipartite graph
can be denoted as $G_{sbi}=\{H,T,E_0\}$, in which $E_0\in E$ is
chosen such that $\{V,E_0\}$ is a minimum spanning tree
(MTS)\footnote{An MST is a subset of the edges of a connected
undirected graph that connects all the vertices together, without
any cycles and with the minimum possible total number of edges,
i.e. $(l-1)$ edges for a graph with $l$ nodes.} for $G$, and $H$
and $T$ are two disjoint sets ($H\cap T=\varnothing$ and $V=H\cup
T$) such that each edge in $E_0$ connects a pair of nodes not
belonging to the same set. Specifically, the simplest bipartite
graph has the same $l$ nodes as in $G =\{ {V,E}\}$, yet only keeps
$l-1$ edges. As will be shown later, the simplest bipartite graph
can be readily obtained from an MST of the graph $G$.

Given a graph $G$, suppose its simplest bipartite graph $G_{sbi}
=\{H,T,E_{0}\}$ is now available to us. Let
$\boldsymbol{L}_{G_{sbi}}$ denote the Laplacian matrix associated
with the simplest bipartite graph. Since $G_{sbi}$ is still a
connected graph, $\tilde{\boldsymbol{L}}$ in (\ref{basic:1}) can
be replaced with
$\boldsymbol{\tilde{L}}_{G_{sbi}}\triangleq\boldsymbol{L}_{G_{sbi}}\otimes
{\boldsymbol{I}_n}$, which yields
\begin{align}
\label{basic:2}
\min_{\tilde {\boldsymbol{x}}} & \quad \sum\limits_{i
\in H} f_i(\boldsymbol{x}_i )+ \sum\limits_{i
\in T} f_i(\boldsymbol{x}_i ) \nonumber\\
\text{s.t.} & \quad \tilde{\boldsymbol{L}}_{G_{sbi}} \tilde
{\boldsymbol{x}} =\boldsymbol{0}
\end{align}
In the following, we will explore some important properties about
$\boldsymbol{\tilde{L}}_{G_{sbi}}$. As will be shown, these
properties play a crucial role in developing a proximal-free
decentralized algorithm.


\newtheorem{lemma}{Lemma}
\begin{lemma}[\emph{\cite{GodsilRoyle01}}]
\label{lemma1} Given an undirected graph $G=\{V,E\}$ consisting of
$l$ nodes and $m$ edges, its corresponding Laplacian matrix
$\boldsymbol{L}$ defined in (\ref{Laplacian-matrix}) can be
decomposed as
\begin{align}
\boldsymbol{L}=\boldsymbol{X}^T\boldsymbol{X}
\end{align}
where $\boldsymbol{X}\in \mathbb{R}^{m\times l}$ is the oriented
incidence matrix of the undirected graph $G$. The oriented
incidence matrix is the incidence matrix of any orientation of the
graph, with its rows and columns indexed by the edges and vertices
of $G$, respectively. Specifically, for an edge connecting two
vertices $i$ and $j$, the row of $\boldsymbol{X}$ used to describe
this edge has two nonzero elements, with its $i$th entry and $j$th
entry equal to $1$ and $-1$ (or $-1$ and $1$) respectively. We
have the following property regarding $\boldsymbol{X}$:
\begin{itemize}
\item If two nodes, say node $i$ and $j$, in
$G$ are unconnected, then $\boldsymbol{X}_i^T \boldsymbol{X}_j=0$,
where $\boldsymbol{X}_i$ and $\boldsymbol{X}_j$ denote the $i$th
and $j$th columns of $\boldsymbol{X}$ respectively. While if node
$i$ and $j$ are connected, then $\boldsymbol{X}_i^T
\boldsymbol{X}_j=-1$. Moreover,
$\boldsymbol{X}_{i}^T\boldsymbol{X}_{i} =\text{deg}(V_i)$, where
$\text{deg}(V_i)$ is the degree of the $i$th node.
\end{itemize}
\end{lemma}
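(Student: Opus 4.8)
The plan is to verify the factorization entrywise and, in doing so, obtain the stated inner-product property as a byproduct, since the two claims are really the same computation. Because the $(i,j)$ entry of $\boldsymbol{X}^T\boldsymbol{X}$ is exactly the inner product of the $i$th and $j$th columns of $\boldsymbol{X}$, namely $(\boldsymbol{X}^T\boldsymbol{X})_{i,j}=\boldsymbol{X}_i^T\boldsymbol{X}_j=\sum_{e\in E}X_{e,i}X_{e,j}$, it suffices to evaluate this sum in the three cases $i=j$, $\{i,j\}\in E$, and $\{i,j\}\notin E$, and then compare the results against the definition (\ref{Laplacian-matrix}).

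First I would handle the diagonal. For $i=j$ each summand is $X_{e,i}^2$, which equals $1$ precisely when edge $e$ is incident to vertex $i$ (so that $X_{e,i}=\pm1$) and $0$ otherwise. Summing over all edges therefore counts the edges incident to $i$, giving $\boldsymbol{X}_i^T\boldsymbol{X}_i=\deg(V_i)=L_{i,i}$.

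Next I would treat the off-diagonal entries with $i\neq j$. A summand $X_{e,i}X_{e,j}$ is nonzero only when edge $e$ is incident to both $i$ and $j$ simultaneously. For a simple graph this occurs for at most one edge, namely the edge joining $i$ and $j$ when it exists. On that edge the two endpoints receive opposite signs $+1$ and $-1$ by the definition of the oriented incidence matrix, so the product is $(+1)(-1)=-1$; hence $\boldsymbol{X}_i^T\boldsymbol{X}_j=-1=L_{i,j}$ when $i$ and $j$ are connected. If no edge joins them, every summand vanishes and $\boldsymbol{X}_i^T\boldsymbol{X}_j=0=L_{i,j}$. Combining the three cases yields $(\boldsymbol{X}^T\boldsymbol{X})_{i,j}=L_{i,j}$ for all $i,j$, that is $\boldsymbol{L}=\boldsymbol{X}^T\boldsymbol{X}$, and the intermediate values computed along the way are exactly the claimed inner-product property.

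Since the argument is a direct entrywise count, I expect no serious obstacle; the only point worth checking carefully is that the conclusion does not depend on the chosen orientation. This is immediate on the diagonal, where only squares appear, and for the off-diagonal case it holds because reversing an edge's orientation flips both endpoint signs at once, leaving the product $X_{e,i}X_{e,j}=-1$ unchanged. If one wished to accommodate multigraphs, the off-diagonal count would instead return minus the number of parallel edges between $i$ and $j$, but under the simple-graph convention used here the stated values hold verbatim.
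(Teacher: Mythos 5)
Your proof is correct and complete: the entrywise evaluation of $(\boldsymbol{X}^T\boldsymbol{X})_{i,j}=\boldsymbol{X}_i^T\boldsymbol{X}_j$ in the three cases $i=j$, $\{i,j\}\in E$, and $\{i,j\}\notin E$ reproduces exactly the definition of $\boldsymbol{L}$ in (\ref{Laplacian-matrix}) and yields the inner-product property along the way. The paper itself offers no proof of this lemma, deferring entirely to the citation \cite{GodsilRoyle01}; your direct computation is the standard argument found in that reference, so you have simply supplied the verification the paper omits, with the added (and correct) observations that the result is orientation-independent and that the simple-graph assumption is what guarantees at most one edge contributes to each off-diagonal entry.
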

\begin{proof}
See \cite{GodsilRoyle01}.
\end{proof}

Besides, we have the following important property regarding the
incidence matrix associated with a simplest bipartite graph.

\newtheorem{lemma2}{Lemma}
\begin{lemma}
\label{lemma2} Given a simplest bipartite graph
$G_{sbi}=\{V(H,T),E_{0}\}$, the oriented incidence matrix
$\boldsymbol{X} \in \mathbb{R}^{(l-1)\times l}$ associated with
$G_{sbi}$ is a full row rank matrix.
\end{lemma}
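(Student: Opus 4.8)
The plan is to leverage Lemma~\ref{lemma1}, which expresses the Laplacian of the simplest bipartite graph as $\boldsymbol{L}_{G_{sbi}}=\boldsymbol{X}^T\boldsymbol{X}$, and to pin down $\text{rank}(\boldsymbol{X})$ through connectedness. First I would record the structural facts that the definition of $G_{sbi}$ supplies: since $E_0$ is chosen so that $\{V,E_0\}$ is a minimum spanning tree of $G$, the graph $G_{sbi}$ is connected, spans all $l$ vertices, and has exactly $l-1$ edges. Hence $\boldsymbol{X}\in\mathbb{R}^{(l-1)\times l}$ has precisely $l-1$ rows, and ``full row rank'' is equivalent to $\text{rank}(\boldsymbol{X})=l-1$, or equivalently to $\dim\,\text{null}(\boldsymbol{X})=1$ by the rank--nullity theorem.

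Next I would use the elementary identity that a real matrix and its Gram matrix have the same null space: $\boldsymbol{X}\boldsymbol{v}=\boldsymbol{0}$ if and only if $\boldsymbol{X}^T\boldsymbol{X}\boldsymbol{v}=\boldsymbol{0}$, the nontrivial direction following from $\boldsymbol{v}^T\boldsymbol{X}^T\boldsymbol{X}\boldsymbol{v}=\|\boldsymbol{X}\boldsymbol{v}\|_2^2$. By Lemma~\ref{lemma1} this gives $\text{null}(\boldsymbol{X})=\text{null}(\boldsymbol{L}_{G_{sbi}})$, so it suffices to show that this common null space is $\text{span}\{\boldsymbol{1}\}$ --- the same property already invoked for the original connected graph $G$. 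I would argue it directly from connectedness: if $\boldsymbol{X}\boldsymbol{v}=\boldsymbol{0}$, then the row of $\boldsymbol{X}$ indexed by an edge $(i,j)\in E_0$ forces $v_i-v_j=0$, so $\boldsymbol{v}$ takes a common value along every edge; because $G_{sbi}$ is connected, $\boldsymbol{v}$ must be constant on all of $V$, i.e. $\boldsymbol{v}\in\text{span}\{\boldsymbol{1}\}$. Conversely $\boldsymbol{X}\boldsymbol{1}=\boldsymbol{0}$ since each row of $\boldsymbol{X}$ sums to zero. Thus $\dim\,\text{null}(\boldsymbol{X})=1$ and $\text{rank}(\boldsymbol{X})=l-1$, which is the claimed full row rank.

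I expect the connectedness step to be the only place where any real thought is required; it is precisely where the minimum-spanning-tree hypothesis enters, while everything else is bookkeeping. An appealing alternative that bypasses the Laplacian entirely is a direct linear-independence argument by induction on the number of edges, exploiting that $G_{sbi}$ is a tree: a finite tree always has a leaf, and the unique edge incident to that leaf is nonzero in a column where every other row of $\boldsymbol{X}$ vanishes, so that edge cannot participate in any nontrivial vanishing linear combination of the rows; deleting the leaf produces a smaller tree, and the claim follows by induction. Either route shows the $l-1$ rows are linearly independent, establishing the lemma.
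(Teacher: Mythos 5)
Your main argument is correct and is essentially the paper's own proof: both establish that the null space of $\boldsymbol{X}$ is exactly $\text{span}\{\boldsymbol{1}\}$, using the fact that each row's two opposite-sign entries force equality of the corresponding components and that connectedness of the spanning tree propagates this equality to all nodes (the paper phrases this contrapositively, as a contradiction derived along a path joining two unequal entries $z_i\neq z_j$, while you phrase it directly as constancy along edges; the Gram-matrix detour through $\boldsymbol{L}_{G_{sbi}}$ in your write-up is harmless but unnecessary, since you then argue on $\boldsymbol{X}$ itself). Your sketched alternative --- leaf-induction on the tree, where the unique edge at a leaf cannot appear in any vanishing combination of rows --- is also sound and genuinely different, but since your primary route coincides with the paper's, no further comparison is needed.
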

\begin{proof}
See Appendix \ref{appA}.
\end{proof}


Let $\boldsymbol{A}$ denote the incidence matrix of the simplest
bipartite graph $G_{sbi}=\{V(H,T),E_{0}\}$. From Lemma 1, we have
$\boldsymbol{L}_{G_{sbi}}=\boldsymbol{A}^T\boldsymbol{A}$ and
${\tilde {\boldsymbol{L}}}_{G_{sbi}}= (\boldsymbol{A}\otimes
\boldsymbol{I}_n))^T(\boldsymbol{A}\otimes \boldsymbol{I}_n)$.
According to Lemma \ref{lemma2}, the incidence matrix
$\boldsymbol{A}$ is full row rank. Therefore problem
(\ref{basic:2}) can be equivalently written as
\begin{align}
\label{basic:3}
\min_{\tilde {\boldsymbol{x}}} & \quad \sum\limits_{i
\in H} f_i(\boldsymbol{x}_i )+ \sum\limits_{i
\in T} f_i(\boldsymbol{x}_i ) \nonumber\\
\text{s.t.} & \quad (\boldsymbol{A}\otimes\boldsymbol{I}_n)
\tilde {\boldsymbol{x}} =\boldsymbol{0}
\end{align}
We divide all the columns of $\boldsymbol{A}$ into two groups,
i.e. $\boldsymbol{A}_H$ and $\boldsymbol{A}_T$, in which the
indexes of the columns in $\boldsymbol{A}_H$ ($\boldsymbol{A}_T$)
are specified by the set $H$ ($T$). Similarly, let $\tilde
{\boldsymbol{x}}_H$ ($\tilde {\boldsymbol{x}}_T$) denote a stacked
column vector consisting of the local variables hold by the nodes
whose indexes belong to the set $H$ ($T$). Thus the optimization
(\ref{basic:3}) can be rewritten as
\begin{align}
\label{basic:4}
\min_{\tilde {\boldsymbol{x}}} & \quad \sum\limits_{i
\in H} f_i(\boldsymbol{x}_i )+ \sum\limits_{i
\in T} f_i(\boldsymbol{x}_i ) \nonumber\\
\text{s.t.} & \quad (\boldsymbol{A}_H\otimes\boldsymbol{I}_n)
\tilde {\boldsymbol{x}}_H+(\boldsymbol{A}_T\otimes\boldsymbol{I}_n)
\tilde {\boldsymbol{x}}_T=\boldsymbol{0}
\end{align}
Also, to facilitate our subsequent exposition, we assume the
dimension of the local variable, $n$, is equal to $1$. In such a
case, the problem (\ref{basic:4}) is simplified as
\begin{align}
\label{basic:5}
\min_{\tilde {\boldsymbol{x}}} & \quad \sum\limits_{i
\in H} f_i(\boldsymbol{x}_i )+ \sum\limits_{i
\in T} f_i(\boldsymbol{x}_i ) \nonumber\\
\text{s.t.} & \quad \boldsymbol{A}_H
\tilde {\boldsymbol{x}}_H+\boldsymbol{A}_T
\tilde {\boldsymbol{x}}_T=\boldsymbol{0}
\end{align}
Note that the following derivations can be readily extended to the
scenario where $n>1$. Also, it is noted that the formulation of
(\ref{basic:3})--(\ref{basic:5}) involves an explicit expression
of $\boldsymbol{A}$. Nevertheless, as will be shown later, our
developed algorithm only requires the knowledge of
$\boldsymbol{L}_{G_{sbi}}=\boldsymbol{A}^T\boldsymbol{A}$, which
is readily available to us.

Since the optimization (\ref{basic:5}) is a typical two-block
problem, the well-established ADMM can be used to solve this
problem, which yields the following three sub-problems:
\begin{align}
\label{Aproxi:1} {{\tilde{\boldsymbol{x}}}_H}(k + 1)=& \arg
\min_{{\tilde{\boldsymbol{x}}}_H} \sum_{i \in
H}f_i({\boldsymbol{x}}_i)
+ \nonumber \\
& \frac{\sigma }{2}\|
{{\boldsymbol{A}}_H}{{\tilde{\boldsymbol{x}}}_H} +
{{\boldsymbol{A}}_T}{{\tilde{\boldsymbol{x}}}_T}(k)
 + \frac{1}{\sigma} \boldsymbol{\lambda}(k) \|_2^2 \nonumber\\
{\tilde{\boldsymbol{x}}_T}(k + 1) =& \arg
\min_{{\tilde{\boldsymbol{x}}}_T}
\sum_{i \in T}f_i({\boldsymbol{x}}_i) \nonumber \\
&+ \frac{\sigma}{2}\| {{\boldsymbol{A}}_H}
{{\tilde{\boldsymbol{x}}}_H}(k+1)  +
{{\boldsymbol{A}}_T}{{\tilde{\boldsymbol{x}}}_T}
+ \frac{1}{\sigma}\boldsymbol{\lambda}( k ) \|_2^2 \nonumber \\
\boldsymbol{\lambda}(k + 1) =& \boldsymbol{\lambda}(k) + \sigma(
{{\boldsymbol{A}}_H}{{\tilde{\boldsymbol{x}}}_H}(k+1)
 +  {{\boldsymbol{A}}_T}{\tilde{{\boldsymbol{x}}}_T}(k+1))
\end{align}
where $\sigma$ is a parameter of user's choice. We will show that
the proposed algorithm, without resorting to proximal terms, can
be solved in a decentralized manner.

Let us first examine the $\boldsymbol{\lambda}$-subproblem.
According to the update formula of $\boldsymbol{\lambda}(k+1)$, we
have
\begin{align}
\label{Aproxi:2}
\boldsymbol{\lambda}(k + 1)& = \boldsymbol{\lambda}(k) +
\sigma( {{\boldsymbol{A}}_H}{{\tilde{\boldsymbol{x}}}_H}(k+1)
 + {{\boldsymbol{A}}_T}{{\tilde{\boldsymbol{x}}}_T}(k+1) ) \nonumber \\
&=\boldsymbol{\lambda}(0)+\sum\limits_{j = 0}^{k}
\sigma({{\boldsymbol{A}}_H}{{\tilde{\boldsymbol{x}}}_H}(j+1)
+ {{\boldsymbol{A}}_T}{{\tilde{\boldsymbol{x}}}_T}(j+1) )
\end{align}
Since $\boldsymbol{\lambda}(0)$ can be an arbitrarily initial
vector, we simply set it to be
$\boldsymbol{\lambda}(0)=\boldsymbol{0}$. In our proposed
algorithm, the $\boldsymbol{\lambda}$-update can be merged into
the update of ${\tilde{\boldsymbol{x}}_H}(k + 1)$ and
${\tilde{\boldsymbol{x}}_T}(k + 1)$. Take the
${\tilde{\boldsymbol{x}}_H}$-subproblem as an example.
Substituting (\ref{Aproxi:2}) into the
${\tilde{\boldsymbol{x}}_H}$-subproblem yields
\begin{align}
\label{Aproxi:3}
{\tilde{\boldsymbol{x}}_H}(k + 1) =& \arg \mathop {\min }
\limits_{{\tilde{\boldsymbol{x}}}_H}\sum_{i \in H} f_i({\boldsymbol{x}}_i) +
\frac{\sigma}{2}\|  {{\boldsymbol{A}}_H}{{\tilde{\boldsymbol{x}}}_H}
 +  {{\boldsymbol{A}}_T}{{\tilde{\boldsymbol{x}}}_T}(k) \nonumber \\
&+\sum\limits_{j = 0}^{k-1}({{\boldsymbol{A}_H}}\tilde{{{\boldsymbol{x}}}}_H(j+1)
 + {{\boldsymbol{A}}_T}{{\tilde{\boldsymbol{x}}}_T}(j+1) ) \|_2^2
\end{align}
Recalling Lemma \ref{lemma1}, we know that
$\boldsymbol{D}_H\triangleq\boldsymbol{A}_H^T\boldsymbol{A}_H$ and
$\boldsymbol{D}_T\triangleq\boldsymbol{A}_T^T\boldsymbol{A}_T$ are
diagonal matrices with its diagonal elements equal to the degrees
of the corresponding nodes. Based on this result, (\ref{Aproxi:3})
can be rewritten as
\begin{align}
\label{Aproxi:6} {\tilde{\boldsymbol{x}}}_H(k + 1) =&\arg \mathop
{\min } \limits_{\tilde{\boldsymbol{x}}_H} \sum_{i \in H}
f_i(\boldsymbol{x}_i ) + \frac{\sigma }{2}\|
{\boldsymbol{D}}_H^{\frac{1}{2}} {\tilde{\boldsymbol{x}}}_H +
\tilde{\boldsymbol{d}}_H \|_2^2
\end{align}
where
\begin{align}
\label{Aproxi:6I} &\tilde{\boldsymbol{d}}_H\triangleq
[\boldsymbol{d}_{i_1}^T\phantom{0}\ldots\phantom{0}
\boldsymbol{d}_{i_{|H|}}^T]^T \quad i_1,\ldots,i_{|H|}\in H \nonumber \\
&\boldsymbol{d}_i\triangleq\mu_i^{\frac{1}{2}} \sum\limits_{j =
0}^{k-1}{\boldsymbol{x}_i}(j+1)
 +\mu_i^{ -\frac{1}{2}}\boldsymbol{A}_i^T{\boldsymbol{A}_T}
\bigg({\tilde{\boldsymbol{x}}_T}(k)+ \sum\limits_{j = 0}^{k-1}
{\tilde{\boldsymbol{x}}_T}(j+1)\bigg)
\end{align}
in which $\mu_i\triangleq\text{deg}(V_i)$ is the degree of the
$i$th node. Since ${\boldsymbol{D}}_H$ in (\ref{Aproxi:6}) is a
diagonal matrix, the problem (\ref{Aproxi:6}) can be decomposed
into a number of independent tasks:
\begin{align}
\label{Aproxi:8} &{\boldsymbol{x}}_i(k + 1) =\arg \mathop {\min }
\limits_{\boldsymbol{x}_i} f_i(\boldsymbol{x}_i ) + \frac{\sigma
}{2}\| \mu_i^{\frac{1}{2}}\boldsymbol{x}_i +\boldsymbol{d}_i
\|_2^2, \ i\in H
\end{align}
Clearly, ${\boldsymbol{x}}_i(k + 1)$ is the solution of the
proximal mapping of $f_i$. If the proximal mapping of $f_i$ has a
closed-form solution, then ${\boldsymbol{x}}_i(k + 1)$ can be
easily obtained. On the other hand, if the proximal mapping of
$f_i$ dose not have a closed-form solution, but $f_i$ is
continuously differentiable and has Lipschitz continuous gradient,
then we can replace $f_i(\boldsymbol{x}_i )$ with its upper bound
\cite{BeckTeboulle09}:
\begin{align}
f_i(\boldsymbol{x}_i(k))+
\langle\boldsymbol{x}_i-\boldsymbol{x}_i(k),\nabla
f_i(\boldsymbol{x}_i )
\rangle+\frac{L_{f_i}}{2}\|\boldsymbol{x}_i-
\boldsymbol{x}_i(k)\|_2^2 \nonumber
\end{align}
where $\boldsymbol{x}_i(k)$ is the solution obtained in the
previous iteration. The resulting problem can thus be easily
solved. We see that for node $i$, solving (\ref{Aproxi:8})
requires the knowledge of $\boldsymbol{d}_i$. Nevertheless, the
calculation of $\boldsymbol{d}_i$ only involves the $i$th node
local information and information from its neighboring (i.e.
connected) nodes. To see this, recalling Lemma \ref{lemma1}, we
have $\boldsymbol{A}_i^T\boldsymbol{A}_j=0$ if node $i$ and $j$
are unconnected, while $\boldsymbol{A}_i^T\boldsymbol{A}_j=-1$ if
node $i$ and $j$ are connected. Thus the calculation of the
following term in $\boldsymbol{d}_i$:
\begin{align}
\boldsymbol{A}_i^T{\boldsymbol{A}_T}
({\tilde{\boldsymbol{x}}_T}(k)+\sum\limits_{j = 0}^{k-1}
{\tilde{\boldsymbol{x}}_T}(j+1)) \quad\forall i \in H \nonumber
\end{align}
only requires information of the nodes that are connected with the
$i$th node. Therefore the update of
${\tilde{\boldsymbol{x}}_H}(k+1)$ can be readily conducted in a
decentralized manner.

Similarly, the ${\tilde{\boldsymbol{x}}_T}(k+1)$-subproblem can
also be decomposed into a set of tasks, with each task
independently solved by each node:
\begin{align}
\label{Aproxi:9} &{\boldsymbol{x}}_i(k + 1) =\arg \mathop {\min }
\limits_{\boldsymbol{x}_i} f_i(\boldsymbol{x}_i ) + \frac{\sigma
}{2}\| \mu_i^{\frac{1}{2}}\boldsymbol{x}_i +\boldsymbol{d}_i
\|_2^2, \ i\in T
\end{align}
where
\begin{align}
\label{Aproxi:10} \boldsymbol{d}_i\triangleq&\mu_i^{\frac{1}{2}}
\sum\limits_{j =
0}^{k-1}{\boldsymbol{x}_i}(j+1) \nonumber\\
 &+\mu_i^{-\frac{1}{2}}\boldsymbol{A}_i^T{\boldsymbol{A}_H}
\bigg({\tilde{\boldsymbol{x}}_H}(k+1)+ \sum\limits_{j = 0}^{k-1}
{\tilde{\boldsymbol{x}}_H}(j+1)\bigg),\ i\in T
\end{align}
It can be easily verified that the update of
${\tilde{\boldsymbol{x}}_T}(k+1)$ also admits a decentralized
implementation. Based on the above discussion, we see that by
utilizing the properties of the simplest bipartite graph, a
proximal-free decentralized algorithm can be developed to perform
decentralized composite optimization over the simplest bipartite
graph. For clarity, the proposed algorithm is summarized in
Algorithm \ref{alg:single}.

\begin{algorithm}
\caption{Proximal-Free ADMM for solving single-objective
optimization problems} \label{alg:single}
\begin{algorithmic}
\STATE{\textbf{Inputs}: $G_{sbi}= \{H,T,E_{0}\}$ , index sets $T$
and $H$, $\sigma$. All the initial vectors are set as
$\boldsymbol{0}$}. \STATE{\textbf{while} not converge \textbf{do}}
\STATE{\textbf{for} $i \in H$, parallelly do:}
\begin{align}
{\boldsymbol{x}}_i(k + 1) =\arg \mathop {\min }
\limits_{\boldsymbol{x}_i} f_i(\boldsymbol{x}_i ) + \frac{\sigma
}{2}\| \mu_i^{\frac{1}{2}}\boldsymbol{x}_i +\boldsymbol{d}_i
\|_2^2, \ i\in H \nonumber
\end{align}
\STATE{\textbf{end for}}\\
\textbf{Information exchange}: Let node $i$, $i \in H$, send
$\boldsymbol{x}_i(k+1)$ to its neighboring nodes.
\STATE{\textbf{for} $i \in T$, parallelly do:}
\begin{align}
{\boldsymbol{x}}_i(k + 1) =\arg \mathop {\min }
\limits_{\boldsymbol{x}_i} f_i(\boldsymbol{x}_i ) + \frac{\sigma
}{2}\| \mu_i^{\frac{1}{2}}\boldsymbol{x}_i +\boldsymbol{d}_i
\|_2^2, \ i\in T \nonumber
\end{align}
\STATE{\textbf{end for}}\\
\textbf{Information exchange}: Let node $i$, $i \in T$, send
$\boldsymbol{x}_i(k+1)$ to its neighboring nodes.
\par{\textbf{end while};}
\par{\textbf{Output: $\tilde{\boldsymbol{x}}$};}
\end{algorithmic}
\end{algorithm}

\section{Finding A Simplest Bipartite Graph} \label{sec:find-SBG}
A prerequisite of our proposed proximal-free decentralized
algorithm is to obtain a simplest bipartite graph corresponding to
the original graph. This task can be accomplished via a two-step
procedure. First, we search for an MST $G_{ms}=\{V,E_0\}$ for the
original graph $G=\{V,E\}$. Then, we divide the nodes $V$ into two
disjoint sets $H$ and $T$ to form a bipartite graph
$G_{bi}=\{H,T,E_0\}$. Such a bipartite graph is a simplest
bipartite graph because it has a minimum number of edges to keep
the graph connected. We will show that both steps can be easily
performed without involving complex procedures. It should be noted
that a simplest bipartite graph not only facilitates the
development of a proximal-free decentralized algorithm, but it
also helps substantially reduce the amount of data for
transmission within the network because a simplest bipartite graph
has a minimum number of edges to keep a graph connected.

\subsection{Finding an MST}
The problem of finding an MST of a weighted, connected graph has
been extensively studied. Many sophisticated methods have been
developed to accomplish this task, including the classic Prime
algorithm \cite{Prim57} and the Kruskal algorithm
\cite{Kruskal56}. Some of these methods can be directly applied to
our problem. Nevertheless, we choose to develop a new algorithm
for finding an MST because existing methods are very complicated
as they were designed specifically for weighted graphs.

Given a graph $G=\{V,E\}$, we develop a simple message passing
scheme to construct an MST $G_{ms}=\{V,E_{0}\}$. The details of
the proposed scheme are summarized in Algorithm
\ref{alg:find-MST}. Briefly speaking, assume a random node (say,
node $1$) is activated, with all the other nodes in a sleeping
mode. This activated node then sends a probing message to its
neighboring nodes (i.e. nodes that are connected to node $1$), say
node $2$. After receiving this message, node $2$ is activated from
sleeping and sends an acknowledgement signal back to node $1$,
such that the link between node $1$ and $2$ is established. Node
$2$ then sends the probing message to its neighboring nodes
(except node $1$). Such a procedure is repeated until all nodes
are activated. During the message passing process, if a node (node
$i$) has already been activated and it receives additional probing
signals from other nodes, a denial signal will be sent back and no
links will be established. On the other hand, if a sleeping node
receives probing signals from multiple nodes simultaneously, it
randomly selects a node to respond and establishes a link between
itself and the selected node, and sends a denial signal to the
other nodes. Since only a single edge is created every time when a
node (except the first node) is activated, the newly generated
graph will have $l-1$ edges in total. Also, it is clear that the
graph is connected because each node is connected to node $1$ by
direct connection or through a multiple hop route, and thus any
two nodes are connected through a certain route. Therefore the
graph generated by our message passing scheme is an MST for the
original graph $G$. For our proposed message passing scheme, each
node needs to communicate with their neighboring nodes only once.
Also, the scheme does not need global coordination or
synchronization, thus suitable for a decentralized implementation.

\begin{algorithm}
\caption{A message passing algorithm for finding an MST}
\label{alg:find-MST}
\begin{algorithmic}
\STATE{\textbf{Inputs}: The original graph $G=\{V,E\}$. All nodes
are sleeping} \STATE{\textbf{Begin}: Randomly choose a node
(denoted as node $1$) and activate this node. Then, let node $1$
send a probing message $p$ to its neighboring nodes.}
\STATE{\textbf{while} not all nodes are activated \textbf{do}}\\

~\\

\STATE{\textbf{for} $i$=$1$:$l$};\\
\textbf{if} node $i$ is sleeping and does
not receive $p$ from other nodes, \textbf{do}\\
\quad Keep sleeping.\\
\textbf{else if} node $i$ is sleeping and received $p$ from
another
node (say node $j$), \textbf{do}\\
\quad \textcircled{\small{1}} Activate node $i$;
\textcircled{\small{2}} Send an acknowledgement signal back to
node $j$, and establish the link between node $i$ and $j$;
\textcircled{\small{3}}
Let node $i$ send $p$ to its neighbors.\\
\textbf{else if} node $i$ is sleeping and receiving $p$ from
multiple nodes, \textbf{do}\\
\quad \textcircled{\small{1}} Activate node $i$;
\textcircled{\small{2}} Randomly select a node, say node $j$, to
respond and establish the link between node $i$ and $j$;
\textcircled{\small{3}} Let node $i$ send
$p$ to its neighboring nodes.\\
\textbf{else} node $i$ is already activated, \textbf{do} \\
\quad If receiving $p$, send a denial signal to other nodes;
otherwise do nothing. \\
\textbf{end if}; \STATE{\textbf{end for};}

~\\

\par{\textbf{end while};}
\par{\textbf{Output: $G_{ms}=\{V,E_{0}\}$};}
\end{algorithmic}
\end{algorithm}

\subsection{Obtaining A Simplest Bipartite Graph}
Given an MST $G_0=\{V,E_0\}$, we now discuss how to divide the
vertices (nodes) $V$ into two disjoint sets $H$ and $T$ such that
each edge in $E_0$ connects a pair of nodes not belonging to the
same set, i.e. $\{H,T,E_0\}$ is a simplest bipartite graph. Again,
a message passing scheme can be used to achieve this goal. Start
from a randomly selected node (denoted as node $1$), and set a
label, say $H$, on node $1$, which means that this node belongs to
the set $H$. Node $1$ sends its label information $H$ to its
neighboring nodes specified by $E_0$, say node $2$. After
receiving the label information from node $1$, node $2$ sets
itself a label $T$ which is different from what is received from
node $1$. The label information of node $2$ is then sent to its
neighboring nodes (except node $1$). This procedure is repeated
until all nodes set their respective labels. Details of the scheme
are provided in Algorithm \ref{alg:obtain-sbg}. One may wonder
whether a node will receive label information from multiple nodes.
The answer is negative. In the message passing scheme, a node only
receives label information from a single node. This can be easily
shown by contradiction. Suppose that node $i$ receives label
information from two nodes, say node $j$ and $k$. This means there
is an edge between node $i$ and $j$, and also an edge between $i$
and $k$. Consequently, there will be two routes between node $i$
and node $1$, which contradicts the fact that the underlying graph
is an MST. Since each node only receives the label information
from a single node, this message passing scheme is guaranteed to
find two disjoint sets such that nodes in the same set do not have
direct links to connect them. Thus a simplest bipartite graph can
be obtained.

From the above discussions, we see that, given a symmetric,
connected graph, a simplest bipartite graph corresponding to this
graph can be easily obtained via a simple two-step procedure which
employs a message passing algorithm to first find an MST and then
uses a similar message passing technique to convert the MST into a
bipartite graph. Throughout the whole process, no global
coordination or synchronization is required. If the topology of
the network is static, such a procedure needs to be implemented
only once before the network is used to conduct any decentralized
tasks.

\begin{algorithm}
\caption{Finding a simplest bipartite graph}
\label{alg:obtain-sbg}
\begin{algorithmic}
\STATE{\textbf{Inputs}: Given an MST $G_{ms}=\{V,E_{0}\}$.}

\STATE{\textbf{Begin}: Randomly pick a node (denoted as node $1$)
and set a label (say $H$) on node $1$. Let node $1$ send its label
information to its neighboring nodes. }
\STATE{\textbf{while} not all nodes are labeled \textbf{do}}\\

~\\

\STATE{\textbf{for} $i$=$1$:$l$}; \\
\textbf{if} node $i$ is unlabeled but receives label
information from another node, \textbf{do}\\
\quad \textcircled{\small{1}} Set itself a label which is
different from the received one; \textcircled{\small{2}} Send its
label information to its neighboring nodes.\\
\textbf{else} node $i$ is already labeled, \textbf{do} \\
\quad Nothing. \\
\textbf{end if}; \STATE{\textbf{end for};}

~\\

\par{\textbf{end while};}
\par{\textbf{Output: $G_{sbi}= \{V(H,T),E_{0}\}$};}
\end{algorithmic}
\end{algorithm}

\section{Extension to Decentralized Composite Optimization
Problems} \label{sec:PF-ADMM-composite} In the previous section,
we have developed a proximal-free ADMM method for single-objective
problems. We now discuss how to extend our proposed method to
composite optimization problems. Following a similar idea (cf.
(\ref{basic:5})), we solve the decentralized composite problem
over a simplest bipartite graph $G_{sbi}= \{V(H,T),E_{0}\}$:
\begin{align}
\mathop {\min }\limits_{\tilde{\boldsymbol{x}}} & \quad
\sum\limits_{i \in
H}\left({f_i}(\boldsymbol{x}_i)+{g_i}(\boldsymbol{x}_i)\right)
+\sum\limits_{i \in
T}\left({f_i}(\boldsymbol{x}_i)+{g_i}(\boldsymbol{x}_i)\right)
\nonumber\\
\text{s.t.} & \quad \boldsymbol{A}_H {\tilde{\boldsymbol{x}}}_H
+\boldsymbol{A}_T{\tilde{\boldsymbol{x}}}_T=\boldsymbol{0}
\label{splt1}
\end{align}
Nevertheless, when applying the ADMM to the above optimization, we
need to calculate the proximal mapping of $f_i+g_i$, which usually
does not have a closed-form solution. To circumvent this
difficulty, we resort to a variable splitting technique which
introduces an auxiliary variable $\tilde{\boldsymbol{y}}^T=
[\boldsymbol{y}^T_1,\cdots,\boldsymbol{y}^T_l]^T$ to the above
problem (\ref{splt1})
\begin{align}
\mathop {\min }\limits_{\tilde{\boldsymbol{x}}} & \quad
\sum\limits_{i \in
H}\left({f_i}(\boldsymbol{x}_i)+{g_i}(\boldsymbol{y}_i)\right)
+\sum\limits_{i \in
T}\left({f_i}(\boldsymbol{x}_i)+{g_i}(\boldsymbol{y}_i)\right)
\nonumber\\
\text{s.t.} & \quad \boldsymbol{A}_H {\tilde{\boldsymbol{x}}}_H
+\boldsymbol{A}_T{\tilde{\boldsymbol{y}}}_T=\boldsymbol{0}, \,
{\tilde{\boldsymbol{x}}}_H={\tilde{\boldsymbol{y}}}_H, \,
{\tilde{\boldsymbol{x}}}_T={\tilde{\boldsymbol{y}}}_T
\label{splt2}
\end{align}
Clearly, the problem (\ref{splt2}) consists of four blocks of
variables. It was shown \cite{ChenHe16} the direct extension of
ADMM to multi-block (more than two-block) convex minimization
problems is not necessarily convergent. Nevertheless, in the
following, we will show that this four-block minimization problem
can be simplified as a conventional two-block problem. Observe
that the constraints in (\ref{splt2}) can be compactly rewritten
as
\begin{align}
\begin{bmatrix}
\boldsymbol{A}_H & \boldsymbol{0} & \boldsymbol{0} & \boldsymbol{A}_T \\
\boldsymbol{I} & -\boldsymbol{I} & \boldsymbol{0} & \boldsymbol{0}  \\
\boldsymbol{0} & \boldsymbol{0} & -\boldsymbol{I} & \boldsymbol{I}  \\
\end{bmatrix}
\begin{bmatrix}
{\tilde{\boldsymbol{x}}}_H \\
{\tilde{\boldsymbol{y}}}_H \\
{\tilde{\boldsymbol{x}}}_T \\
{\tilde{\boldsymbol{y}}}_T \\
\end{bmatrix}
=\boldsymbol{0}
\end{align}
Define two new block variables ${\tilde{\boldsymbol{z}}}_f^T
\triangleq [{\tilde{\boldsymbol{x}}}_H^T \
{\tilde{\boldsymbol{x}}}_T^T]^T$ and
${\tilde{\boldsymbol{z}}}_g^T\triangleq
[{\tilde{\boldsymbol{y}}}_H^T \ {\tilde{\boldsymbol{y}}}_T^T]^T$,
and let
\begin{align}
&\boldsymbol{C}_H\triangleq
\begin{bmatrix}
\boldsymbol{A}_H & \boldsymbol{0}\\
\boldsymbol{I} & \boldsymbol{0} \\
\boldsymbol{0} & -\boldsymbol{I}
\end{bmatrix}, \
\boldsymbol{C}_T\triangleq
\begin{bmatrix}
\boldsymbol{0} & \boldsymbol{A}_T\\
-\boldsymbol{I} & \boldsymbol{0} \\
\boldsymbol{0} & \boldsymbol{I}
\end{bmatrix}, \nonumber \\
&\tilde{f}({\tilde{\boldsymbol{z}}}_f)\triangleq\sum\limits_{i \in
H} {f_i}(\boldsymbol{x}_i)+\sum\limits_{i \in T}
{f_i}(\boldsymbol{x}_i)=\sum\limits_{i}
{f_i}(\boldsymbol{x}_i), \nonumber\\
&\tilde{g}({\tilde{\boldsymbol{z}}}_g)\triangleq\sum\limits_{i \in
H}{g_i}(\boldsymbol{y}_i) +\sum\limits_{i \in
T}{g_i}(\boldsymbol{y}_i) =\sum\limits_{i} {g_i}(\boldsymbol{y}_i)
\end{align}
Thus the problem (\ref{splt2}) can be rewritten as
\begin{align}
\mathop {\min }\limits_{{\tilde{\boldsymbol{z}}}_f, \
{\tilde{\boldsymbol{z}}}_g} & \quad
\tilde{f}({\tilde{\boldsymbol{z}}}_f)+
\tilde{g}({\tilde{\boldsymbol{z}}}_g) \nonumber\\
\text{s.t.} & \quad \boldsymbol{C}_H{\tilde{\boldsymbol{z}}}_f
+\boldsymbol{C}_T{\tilde{\boldsymbol{z}}}_g=\boldsymbol{0}
\label{splt4}
\end{align}
We see that the problem (\ref{splt2}) now has been converted into
a two-block optimization problem for which the standard ADMM
technique can be readily applied. For clarity, the proximal-free
ADMM is summarized in Algorithm \ref{alg:composite}, in which the
update of the Lagrangian multipliers has been merged into the
update of primal variables. In the following, we show that the
proposed algorithm can be easily implemented in a decentralized
way.

\begin{algorithm}
\caption{Proximal-Free ADMM} \label{alg:composite}
\begin{algorithmic}
\STATE{\textbf{Inputs}: $G_{sbi}= \{V(H,T), E_{0}\}$ , index sets
$T$ and $H$, $\sigma$. All the initial vectors are set as
$\boldsymbol{0}$}.
\STATE{\textbf{while} not converge \textbf{do}}
\begin{align}
\label{compo1} {\tilde{\boldsymbol{z}}_f}(k + 1) =& \arg \mathop
{\min } \limits_{\tilde{\boldsymbol{z}}_f}
\tilde{f}({\tilde{\boldsymbol{z}}}_f) + \frac{\sigma}{2}\|
{{\boldsymbol{C}}_H}{{\tilde{\boldsymbol{z}}}_f}
 +  {{\boldsymbol{C}}_T}{{\tilde{\boldsymbol{z}}}_g}(k) \nonumber \\
&+\sum\limits_{j =
0}^{k-1}({{\boldsymbol{C}_H}}\tilde{{{\boldsymbol{z}}}}_f(j+1)
 + {{\boldsymbol{C}}_T}{{\tilde{\boldsymbol{z}}}_g}(j+1) ) \|_2^2 \nonumber\\
{\tilde{\boldsymbol{z}}_g}(k + 1) =& \arg \mathop {\min }
\limits_{\tilde{\boldsymbol{z}}_g}
\tilde{g}({\tilde{\boldsymbol{z}}}_g) + \frac{\sigma}{2}\|
{{\boldsymbol{C}}_H}{{\tilde{\boldsymbol{z}}}_f}(k+1)
 +  {{\boldsymbol{C}}_T}{{\tilde{\boldsymbol{z}}}_g} \nonumber \\
&+\sum\limits_{j =
0}^{k-1}({{\boldsymbol{C}_H}}\tilde{{{\boldsymbol{z}}}}_f(j+1)
 + {{\boldsymbol{C}}_T}{{\tilde{\boldsymbol{z}}}_g}(j+1) ) \|_2^2
 \nonumber
\end{align}
\par{\textbf{end while};}
\par{\textbf{Output: ${\tilde{\boldsymbol{z}}}_f^T
=[{\tilde{\boldsymbol{x}}}_H^T \ {\tilde{\boldsymbol{x}}}_T^T]^T,
{\tilde{\boldsymbol{z}}}_g^T=[{\tilde{\boldsymbol{y}}}_H^T \
{\tilde{\boldsymbol{y}}}_T^T]^T$};}
\end{algorithmic}
\end{algorithm}

According to the definitions of $\boldsymbol{C}_H$ and
$\boldsymbol{C}_T$, we have
\begin{align}
\tilde{\boldsymbol{D}}_H\triangleq\boldsymbol{C}_H^T\boldsymbol{C}_H=&
\begin{bmatrix}
\boldsymbol{A}_H^T\boldsymbol{A}_H+\boldsymbol{I} & \boldsymbol{0} \\
\boldsymbol{0} & \boldsymbol{I}
\end{bmatrix}, \nonumber \\
\boldsymbol{C}_H^T\boldsymbol{C}_T=&
\begin{bmatrix}
-\boldsymbol{I} & \boldsymbol{A}_H^T\boldsymbol{A}_T \\
\boldsymbol{0} & -\boldsymbol{I}
\end{bmatrix}
\end{align}
As aforementioned, $\boldsymbol{A}_H^T\boldsymbol{A}_H$ is a
diagonal matrix. Hence $\tilde{\boldsymbol{D}}_H$ is also a
diagonal matrix. The ${\tilde{\boldsymbol{z}}_f}(k +
1)$-subproblem in Algorithm \ref{alg:composite} can therefore be
simplified as
\begin{align}
\label{compo3} {\tilde{\boldsymbol{z}}}_f(k + 1)=\arg \mathop
{\min } \limits_{{\tilde{\boldsymbol{z}}}_f}
\tilde{f}({\tilde{\boldsymbol{z}}}_f) +
\frac{\sigma}{2}\|{\tilde{\boldsymbol{D}}}_H^{\frac{1}{2}}
{\tilde{\boldsymbol{z}}}_f+{\tilde{\boldsymbol{e}}}_f \|_2^2
\end{align}
where
\begin{align}
\label{compo4} &\tilde{\boldsymbol{e}}_f^T\triangleq
[\boldsymbol{e}_{i_1}^T \ \ldots \ \boldsymbol{e}_{i_{|H|}}^T \
\boldsymbol{e}_{j_1}^T \ \ldots \ \boldsymbol{e}_{j_{|T|}}^T]^T
\end{align}
with $i_1,\ldots,i_{|H|}\in H$ and $j_1,\ldots,j_{|T|}\in T$.
Also, $\boldsymbol{e}_i,\forall i\in H$ and $\boldsymbol{e}_i,
\forall i\in T$ are respectively given as
\begin{align}
\boldsymbol{e}_i\triangleq & (\mu_i+1)^{
-\frac{1}{2}}\bigg(\boldsymbol{w}_i -({\boldsymbol{y}_i}(k)+
\sum\limits_{j = 0}^{k-1}
{\boldsymbol{y}_i}(j+1))\bigg) \nonumber \\
&+(\mu_i+1)^{\frac{1}{2}} \sum\limits_{j =
0}^{k-1}{\boldsymbol{x}_i}(j+1), \quad i \in H \nonumber\\
\boldsymbol{e}_i \triangleq &\sum\limits_{j =
0}^{k-1}{\boldsymbol{x}_i}(j+1) -({\boldsymbol{y}_i}(k)+
\sum\limits_{j = 0}^{k-1} {\boldsymbol{y}_i}(j+1)), \quad i \in T
\label{ei-definition}
\end{align}
in which
\begin{align}
\boldsymbol{w}_i\triangleq\boldsymbol{A}_i^T{\boldsymbol{A}_T}
\bigg({\tilde{\boldsymbol{y}}_T}(k)+ \sum\limits_{j = 0}^{k-1}
{\tilde{\boldsymbol{y}}_T}(j+1)\bigg)
\end{align}
Since $\tilde{\boldsymbol{D}}_H$ is a diagonal matrix, the
optimization (\ref{compo3}) can be decomposed into a set of
independent tasks, with each task separately solved by each node:
\begin{align}
\label{compo5} &\boldsymbol{x}_i=\arg \mathop {\min }
\limits_{\boldsymbol{x}_i} f_i(\boldsymbol{x}_i)
+\frac{\sigma}{2}\|(\mu_i+1)^{\frac{1}{2}}\boldsymbol{x}_i
+\boldsymbol{e}_i\|_2^2, \quad i\in H \nonumber\\
&\boldsymbol{x}_i=\arg \mathop {\min } \limits_{\boldsymbol{x}_i}
f_i(\boldsymbol{x}_i) +\frac{\sigma}{2}\|\boldsymbol{x}_i
+\boldsymbol{e}_i\|_2^2, \quad i \in T
\end{align}
Also, the calculation of $\boldsymbol{e}_i,\forall i\in H$ and
$\boldsymbol{e}_i,\forall i\in T$ can be implemented in a
decentralized manner. To see this, note that both
$\boldsymbol{x}_i(j)$ and $\boldsymbol{y}_i(j)$ are local
information readily available to the $i$th node, and the
calculation of $\boldsymbol{w}_i$ only involves information
exchange among neighboring nodes because the $j$th element of
$\boldsymbol{A}_i^T{\boldsymbol{A}_T}$ is nonzero and equals to
$-1$ only if node $i$ and node $j$ are connected. Therefore, the
${\tilde{\boldsymbol{z}}_f}(k + 1)$-subproblem admits a
decentralized implementation.

Similar to the above derivation, the ${\tilde{\boldsymbol{z}}_g}
(k + 1)$-subproblem can also be decomposed into a set of tasks,
with each task independently solved by each node:
\begin{align}
\label{compo6} &\boldsymbol{y}_i=\arg \mathop {\min }
\limits_{\boldsymbol{y}_i} g_i(\boldsymbol{y}_i)
+\frac{\sigma}{2}\|\boldsymbol{y}_i
+\boldsymbol{c}_i\|_2^2, \quad i \in H \nonumber\\
&\boldsymbol{y}_i=\arg \mathop {\min } \limits_{\boldsymbol{y}_i}
g_i(\boldsymbol{y}_i)
+\frac{\sigma}{2}\|(\mu_i+1)^{\frac{1}{2}}\boldsymbol{y}_i
+\boldsymbol{c}_i\|_2^2, \quad i \in T
\end{align}
where $\boldsymbol{c}_i,\forall i\in H$ and $\boldsymbol{c}_i,
\forall i\in T$ are respectively given as
\begin{align}
\boldsymbol{c}_i \triangleq& \sum\limits_{j =
0}^{k-1}{\boldsymbol{y}_i}(j+1)-({\boldsymbol{x}_i}(k+1)+
\sum\limits_{j = 0}^{k-1}
{\boldsymbol{x}_i}(j+1)), \quad i \in H \nonumber\\
\boldsymbol{c}_i\triangleq & (\mu_i+1)^{ -\frac{1}{2}}
\bigg(\boldsymbol{\varpi}_i-({\boldsymbol{x}_i}(k+1)+
\sum\limits_{j = 0}^{k-1}{\boldsymbol{x}_i}(j+1))\bigg)+\nonumber \\
&(\mu_i+1)^{\frac{1}{2}} \sum\limits_{j =0}^{k-1}
{\boldsymbol{y}_i}(j+1), \quad i \in T
\end{align}
in which
\begin{align}
\boldsymbol{\varpi}_i\triangleq\boldsymbol{A}_i^T{\boldsymbol{A}_H}
({\tilde{\boldsymbol{x}}_H}(k+1)+ \sum\limits_{j = 0}^{k-1}
{\tilde{\boldsymbol{x}}_H}(j+1))
\end{align}
The calculation of $\boldsymbol{c}_i,\forall i\in H$ and
$\boldsymbol{c}_i,\forall i\in T$ can be implemented in a
decentralized manner, due to similar reasons discussed for
$\boldsymbol{e}_i$ in (\ref{ei-definition}). Therefore, the
${\tilde{\boldsymbol{z}}_g}(k+1)$-subproblem admits a
decentralized implementation.


In summary, based on the above discussions, we see that the
information only needs to be exchanged among connected nodes. More
specifically, at each iteration, the calculation of
$\boldsymbol{w}_i$ requires each node, say node $j$, in the set
$T$ sends $\boldsymbol{y}_j(k)$ to its neighboring nodes, while
the calculation of $\boldsymbol{\varpi}_i$ requires each node, say
node $j$, in the set $H$ sends $\boldsymbol{x}_i(k)$ to its
neighboring nodes. To summarize, in Algorithm \ref{alg:DPF-ADMM},
we provide details of the decentralized version of the proposed
proximal-free algorithm.

\begin{algorithm}
\caption{A Decentralized Version of Proximal-Free ADMM}
\label{alg:DPF-ADMM}
\begin{algorithmic}
\STATE{\textbf{Inputs}: $G_{sbi}= \{H,T,E_{0}\}$ , index sets $T$
and $H$, $\sigma$. All the initial vectors are set as
$\boldsymbol{0}$}. \STATE{\textbf{while} not converge \textbf{do}}
\STATE{\textbf{for} $i=1:l$, parallelly do:}
\begin{align}
&\boldsymbol{x}_i=\arg \mathop {\min } \limits_{\boldsymbol{x}_i}
f_i(\boldsymbol{x}_i)
+\frac{\sigma}{2}\|(\mu_i+1)^{\frac{1}{2}}\boldsymbol{x}_i
+\boldsymbol{e}_i\|_2^2, \ i \in H \nonumber \\
&\boldsymbol{x}_i=\arg \mathop {\min } \limits_{\boldsymbol{x}_i}
f_i(\boldsymbol{x}_i) +\frac{\sigma}{2}\|\boldsymbol{x}_i
+\boldsymbol{e}_i\|_2^2, \ i \in T \nonumber
\end{align}
\STATE{\textbf{end for}}\\
\textbf{Information exchange}: Let node $i$, $i \in H$, send
$\boldsymbol{x}_i(k+1)$ to its neighbor nodes.
\STATE{\textbf{for}
$i=1:l$, parallelly do:}
\begin{align}
&\boldsymbol{y}_i=\arg \mathop {\min } \limits_{\boldsymbol{y}_i}
g_i(\boldsymbol{y}_i) +\frac{\sigma}{2}\|\boldsymbol{y}_i
+\boldsymbol{c}_i\|_2^2, \ i \in H \nonumber\\
&\boldsymbol{y}_i=\arg \mathop {\min } \limits_{\boldsymbol{y}_i}
g_t(\boldsymbol{y}_i)
+\frac{\sigma}{2}\|(\mu_i+1)^{\frac{1}{2}}\boldsymbol{y}_i
+\boldsymbol{c}_i\|_2^2, \ i \in T \nonumber
\end{align}
\STATE{\textbf{end for}}\\
\textbf{Information exchange}: Let node $i$, $i \in T$, send
$\boldsymbol{y}_i(k+1)$ to its neighbor nodes.
\par{\textbf{end while};}
\par{\textbf{Output: ${\tilde{\boldsymbol{x}}}_f^T
=[{\tilde{\boldsymbol{x}}}_H^T \ {\tilde{\boldsymbol{x}}}_T^T]^T,
{\tilde{\boldsymbol{y}}}_g^T=[{\tilde{\boldsymbol{y}}}_H^T \
{\tilde{\boldsymbol{y}}}_T^T]^T$};}
\end{algorithmic}
\end{algorithm}

\subsection{Discussions}
We now discuss the computational complexity of our proposed
proximal-free algorithm. At each iteration, each node involves
solving two subproblems, namely, the $\boldsymbol{x}_i$-subproblem
and the $\boldsymbol{y}_i$-subproblem, which are the proximal
mappings of $f_i$ and $g_i$, respectively. The calculation
involved in solving these two subproblems usually has a low
computational complexity. For many practical problems, the
nonsmooth function $g_i$ is usually a convex penalty function such
as the $\ell_1$-norm or $\ell_{\infty}$-norm whose proximal
mapping can be reduced to simple element-wise operations. On the
other hand, in many practical applications, the function $f_i$ is
a data fitting term which has a canonical form as
\begin{align}
f_i(\boldsymbol{x}_i)=\frac{1}{2}\|\boldsymbol{M}_i
\boldsymbol{x}_i-\boldsymbol{b}_i\|_2^2
\end{align}
where $\boldsymbol{M}_i \in \mathbb{R}^{m_i\times n}$. The
proximal mapping of $f_i$ is given as
\begin{align}
\text{prox}_{f_i}(\boldsymbol{x}_i)=&(\boldsymbol{M}^T_i
\boldsymbol{M}_i+\sigma(1+\mu_i)\boldsymbol{I})^{-1} \nonumber\\
&\times(\boldsymbol{M}_i^T
\boldsymbol{b}_i-\sigma(\mu_i+1)^{\frac{1}{2}}\boldsymbol{e}_i)
\end{align}
Although the proximal mapping of $f_i$ involves computing the
inverse of an $n\times n$ matrix, this matrix to be inverted
remains unaltered throughout the iterative process. Therefore the
inverse operation needs to be performed only once. Also, by
resorting to the Woodbury identity, the complexity of the matrix
inverse can be reduced to $\mathcal{O}(m_i n^2)$. Once the inverse
is calculated, $\text{prox}_{f_i}(\boldsymbol{x}_i)$ only requires
simple calculations and has a low computational complexity.

\section{Simulation Results} \label{sec:simulation-results}
\label{sec:numer} In this section, we provide simulation results
to illustrate the performance of our proposed decentralized
proximal-free ADMM algorithm (referred to as DPF-ADMM). Two
numerical examples are considered, in which the first example aims
to solve a conventional $\ell_1+\ell_2$ compressed sensing
problem, while the other example is an $\ell_1+\ell_1$ compressed
sensing problem with both $f_i$ and $g_i$ being nonsmooth
functions. To fully evaluate the performance, our experiments are
conducted over three different graphs, i.e. a line type graph with
10 nodes and 9 edges, a fully connected graph with 10 nodes and 45
edges and a randomly generated connected graph with 10 nodes and
18 edges, see Fig. \ref{fig1}. For each graph, the two-step
procedure introduced in Section \ref{sec:find-SBG} is employed to
find a simplest bipartite graph before our proposed algorithm is
executed to solve the decentralized problems.

\begin{figure*}
 \centering
\begin{tabular}{cc}
\includegraphics[width=5.5cm,height=3.5cm]{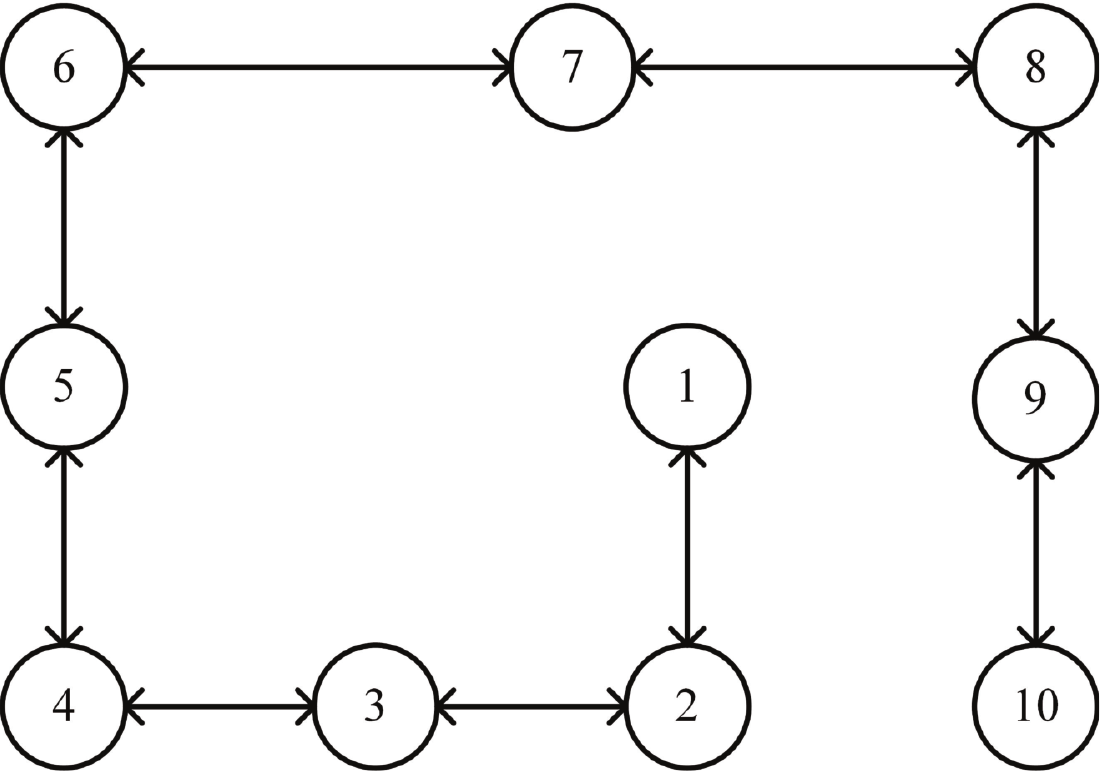}
\hspace*{5pt}
\includegraphics[width=5cm,height=4cm]{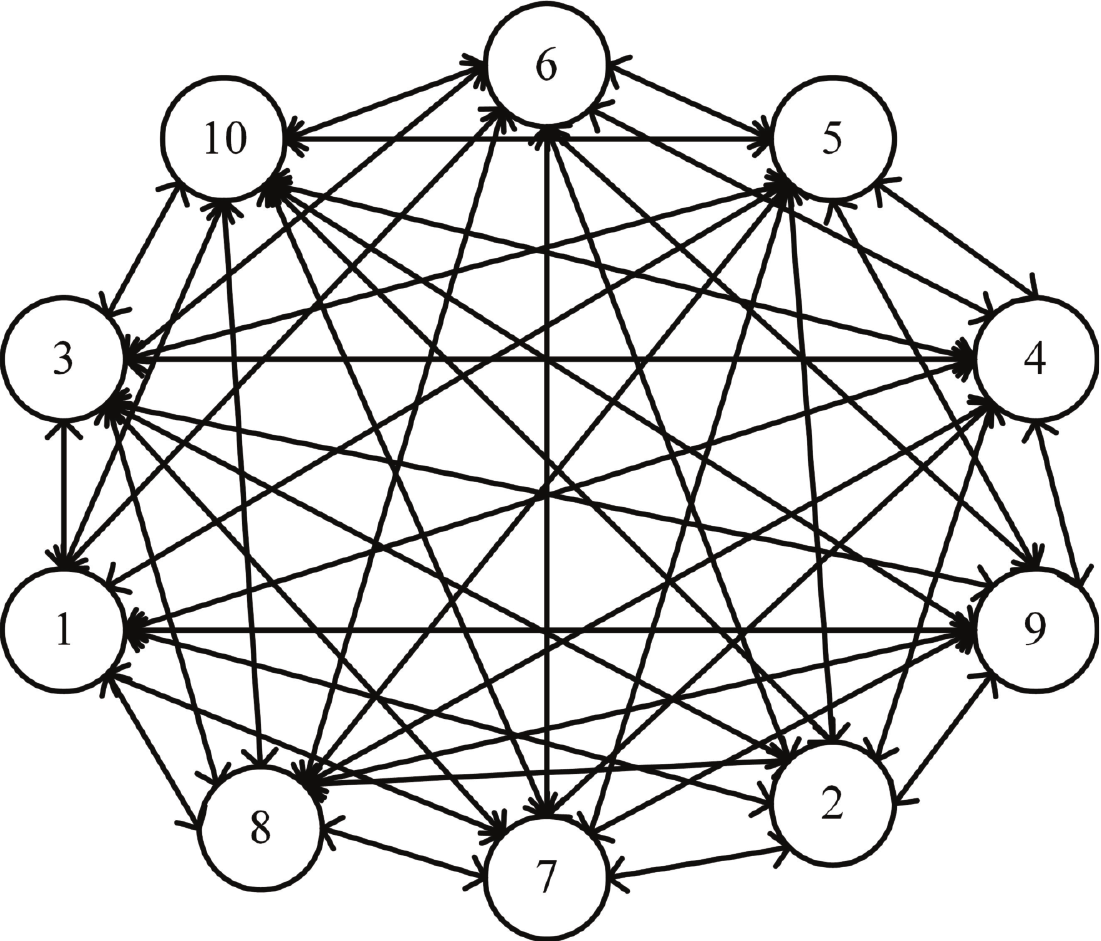}
\hspace*{5pt}
\includegraphics[width=5cm,height=4cm]{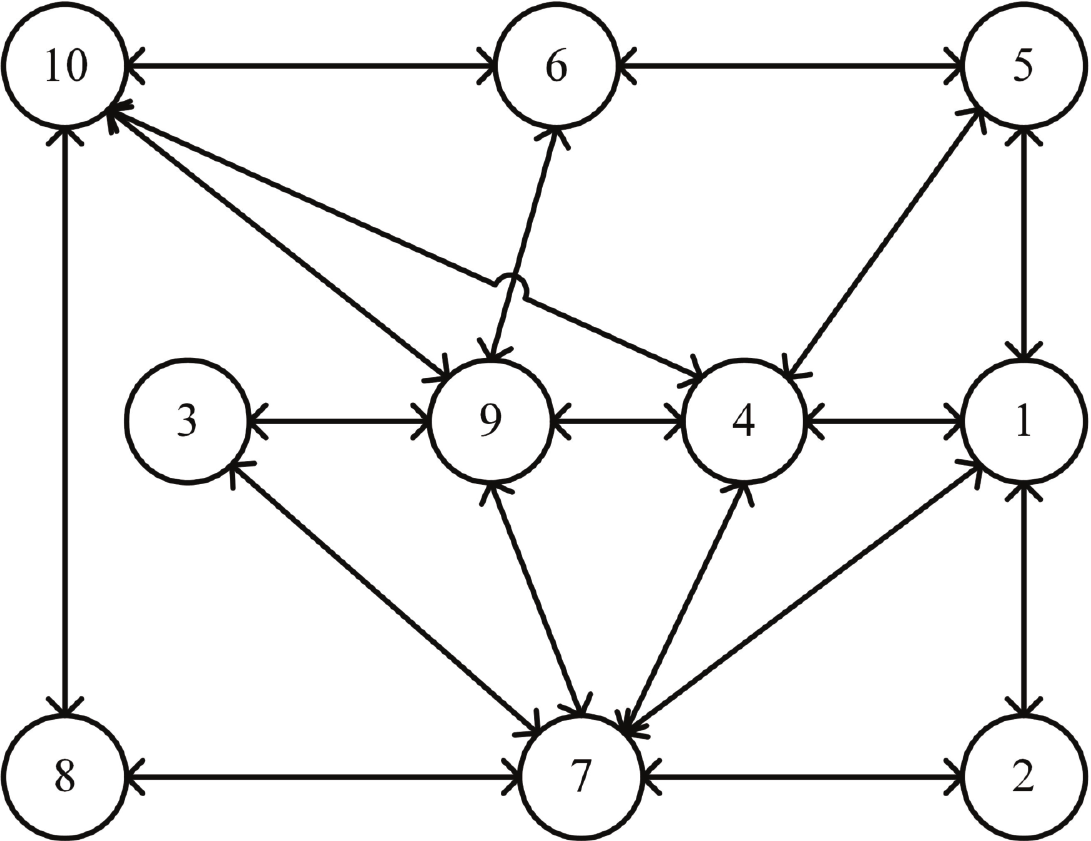}
\end{tabular}\\
\vspace*{2ex}
\begin{tabular}{cc}
\includegraphics[width=5.5cm,height=3.5cm]{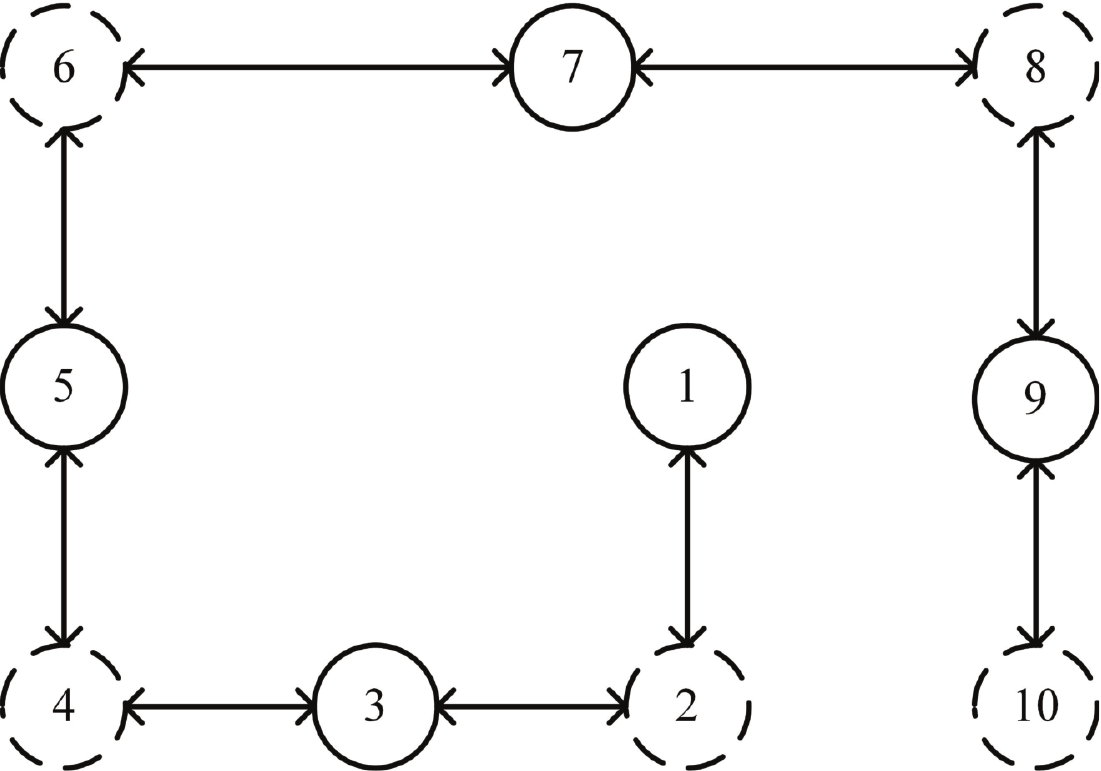}
\hspace*{5pt}
\includegraphics[width=5cm,height=4cm]{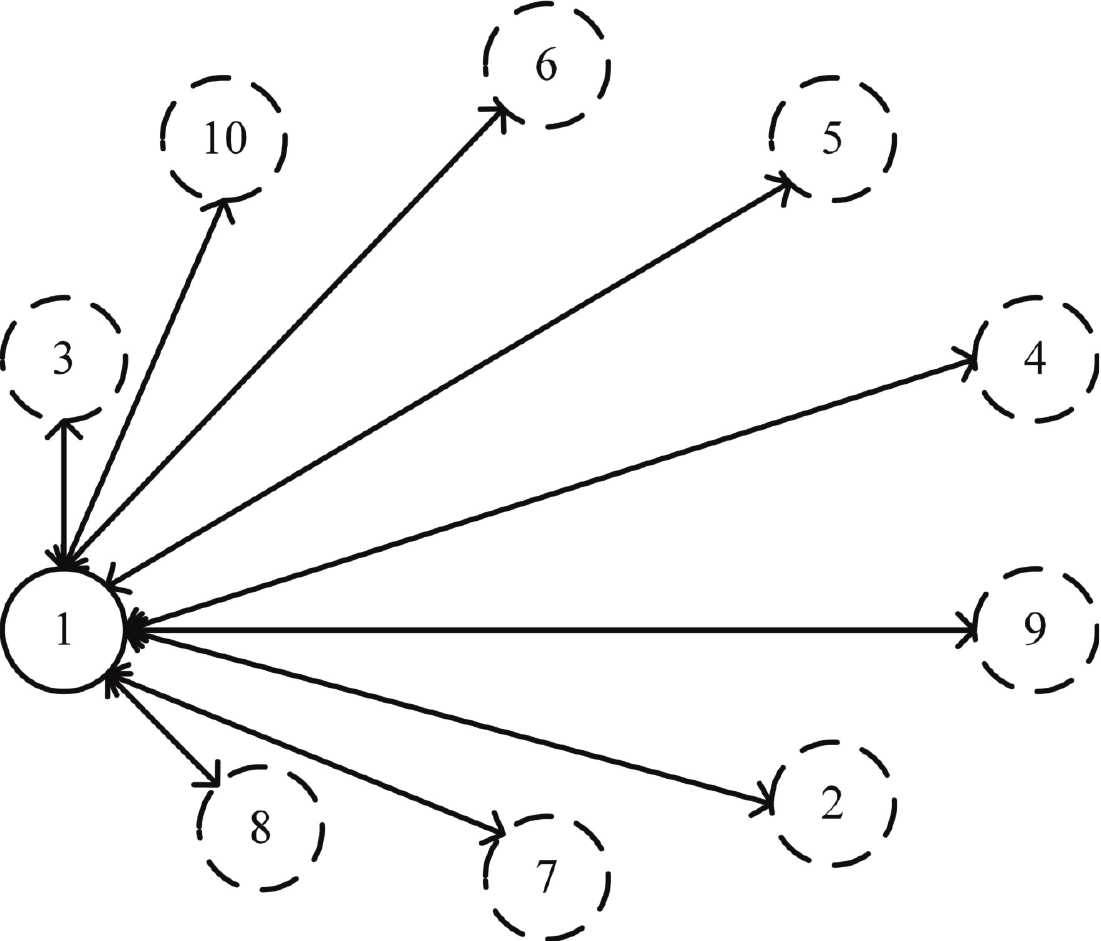}
\hspace*{5pt}
\includegraphics[width=5cm,height=4cm]{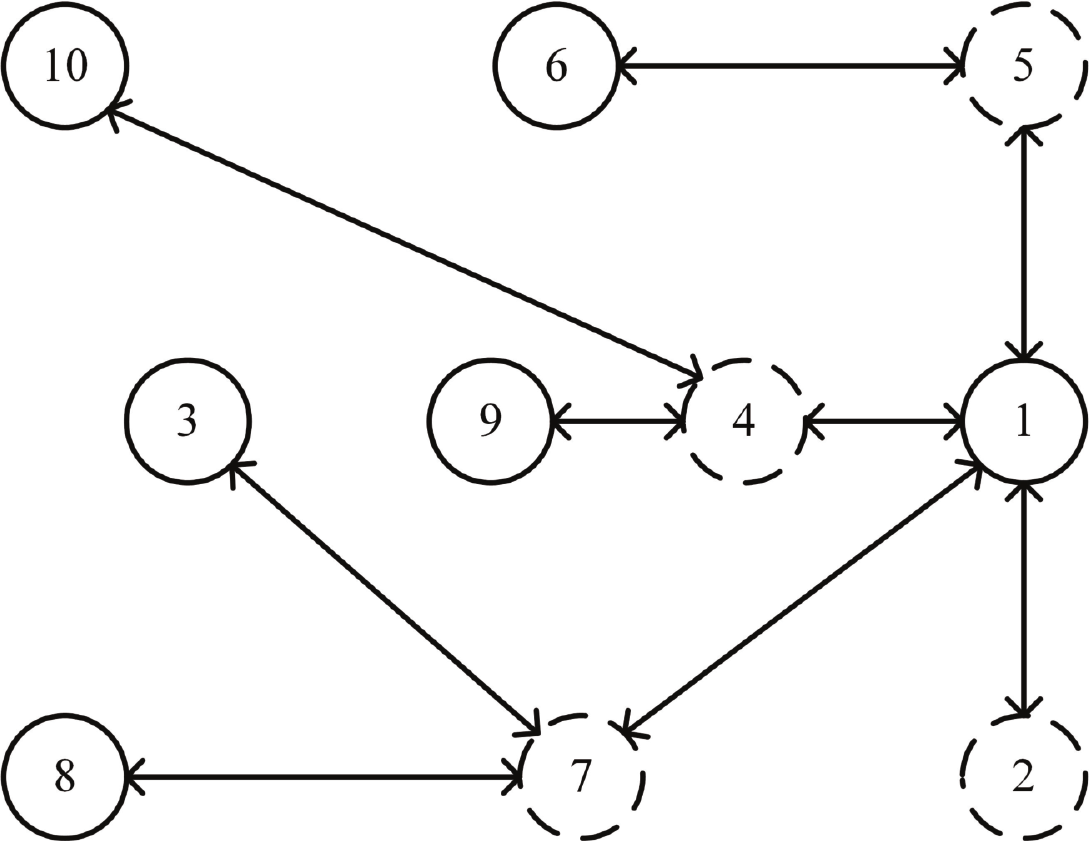}
\end{tabular}
  \caption{The original graphs and their corresponding simplest
  bipartite graphs. Top: original graphs (Left: line graph;
  Middle: fully connected graph; Right: random graph); Bottom:
  obtained simplest bipartite graphs
  (Left: The simplest bipartite graph of the line graph;
  Middle: The simplest bipartite graph of the fully connected
  graph; Right: The simplest bipartite graph of the random graph).
  The circles with solid lines denote nodes belonging to the index set $H$,
  and those with dashed lines denote nodes belonging to $T$.
  }
  \label{fig1}
\end{figure*}

\begin{figure*}[t]
    \centering
    \subfigure[Random graph.]{\includegraphics[width=2.3in]{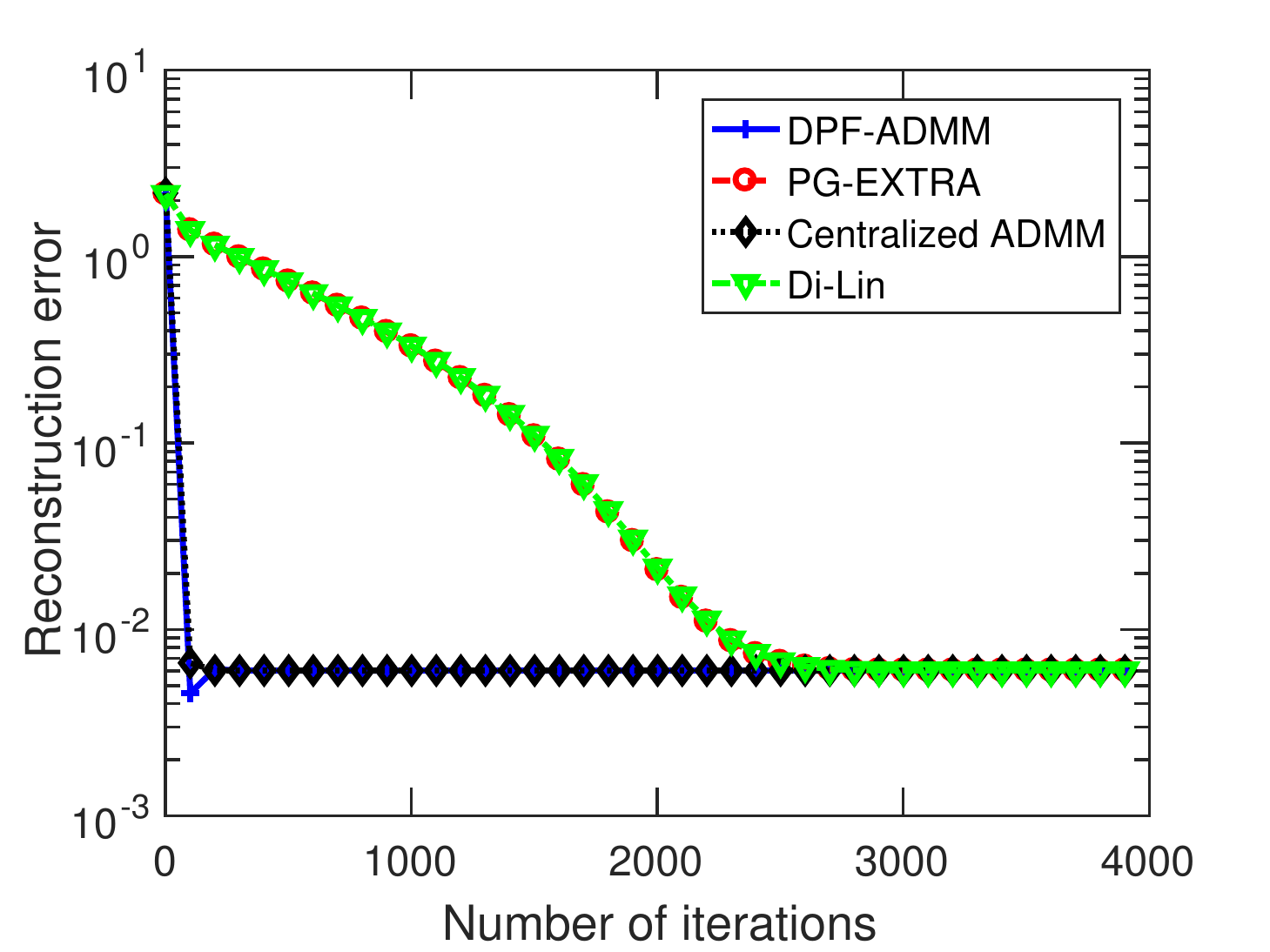}} \hfil
    \subfigure[Line graph.]{\includegraphics[width=2.3in]{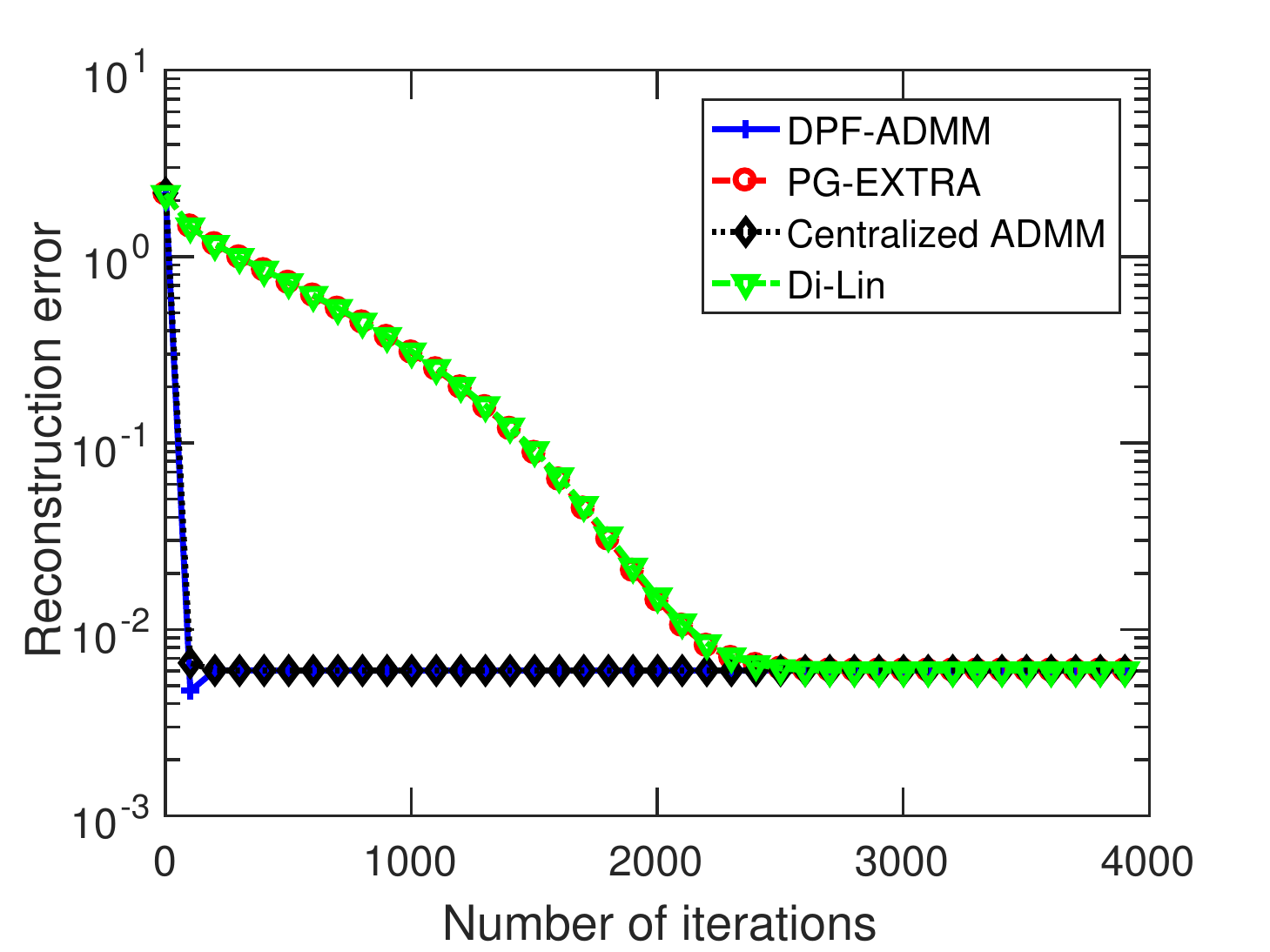}} \hfil
    \subfigure[Fully connected graph.]{\includegraphics[width=2.3in]{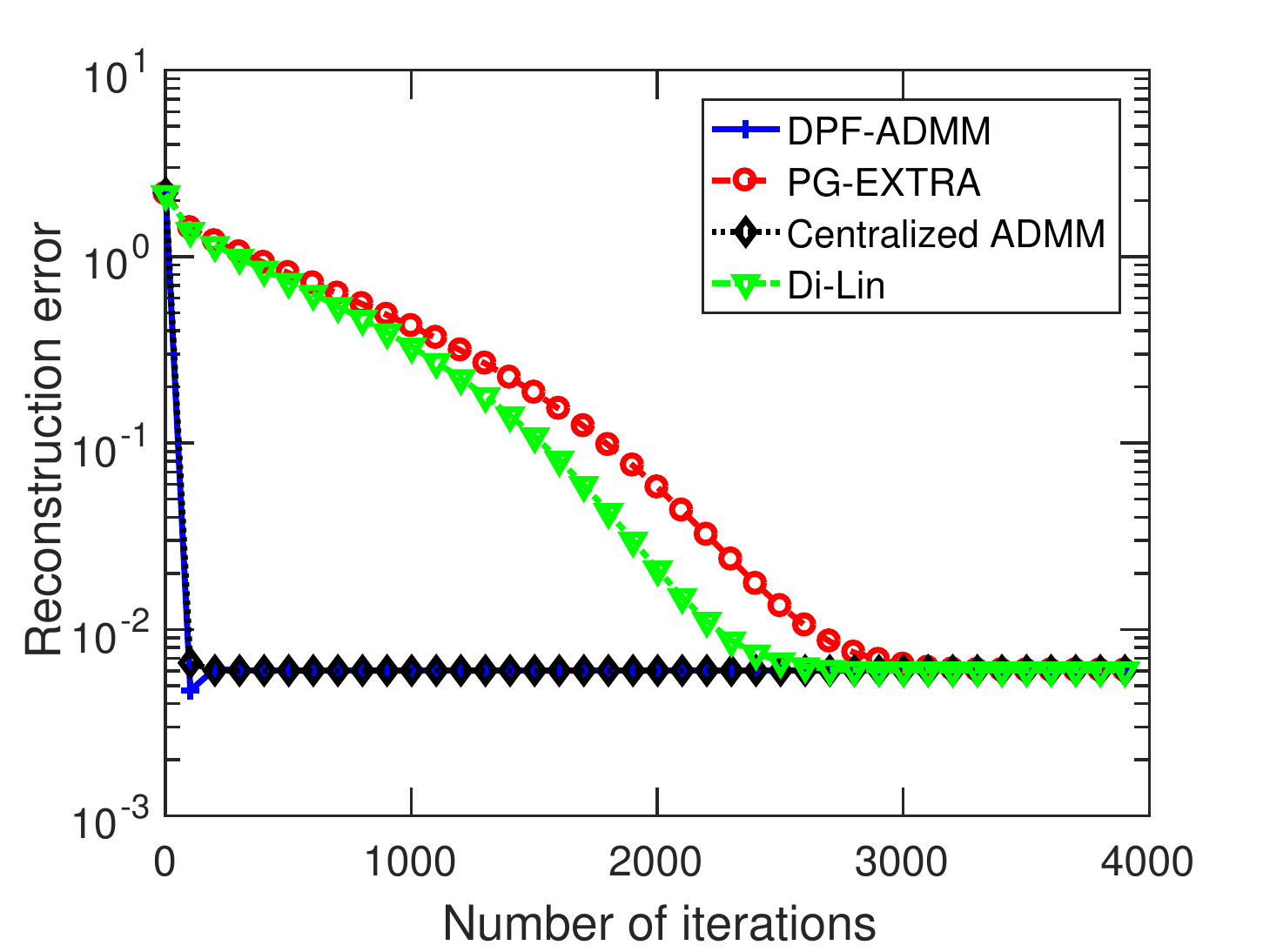}}
    \caption{
    $\ell_2$+$\ell_1$ compressed sensing: reconstruction errors vs. the number of iterations for respective algorithms.}
    \label{fig2}
\end{figure*}

\begin{figure*}[t]
    \centering
    \subfigure[Random graph.]{\includegraphics[width=2.3in]{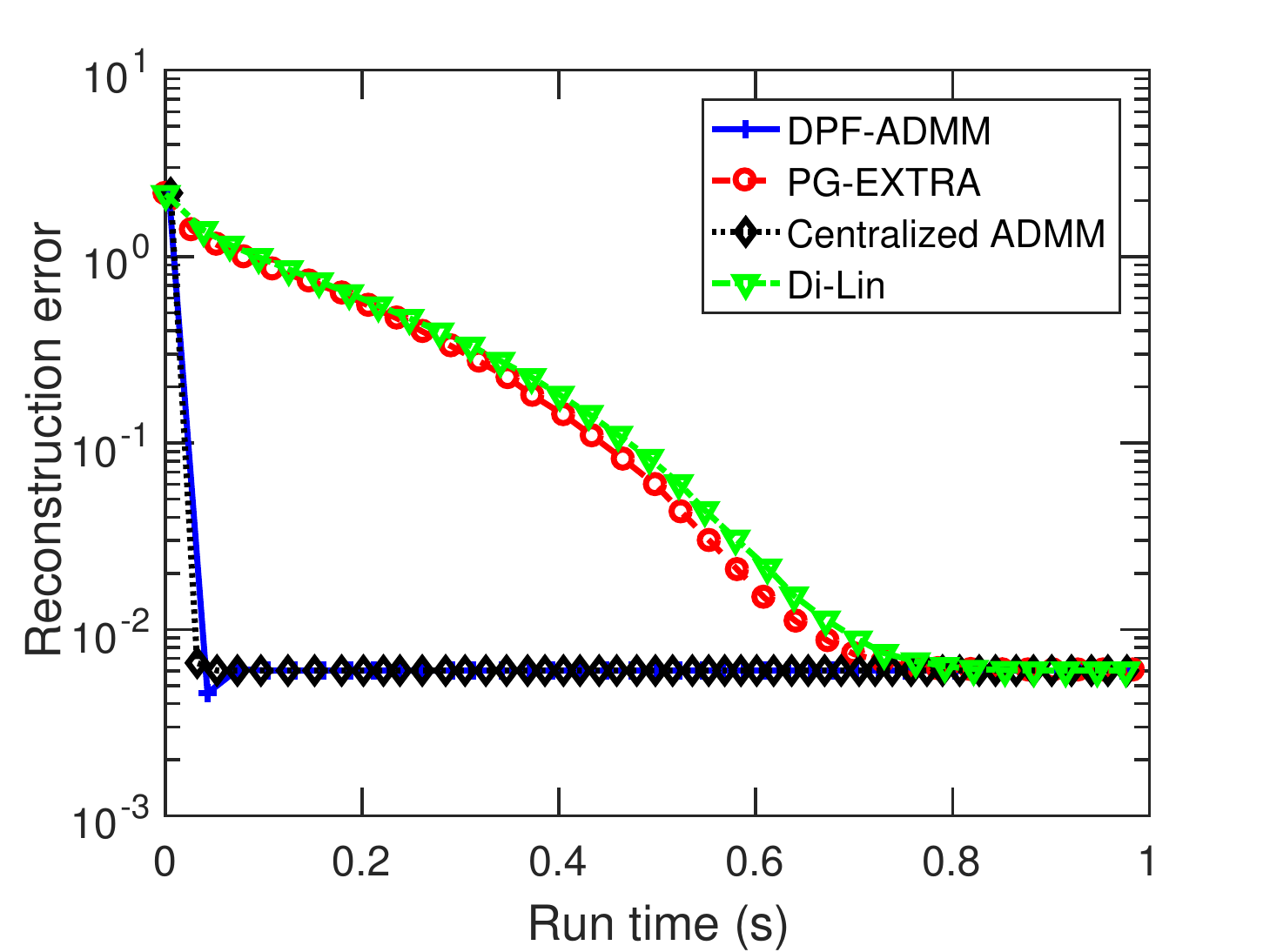}} \hfil
    \subfigure[Line graph.]{\includegraphics[width=2.3in]{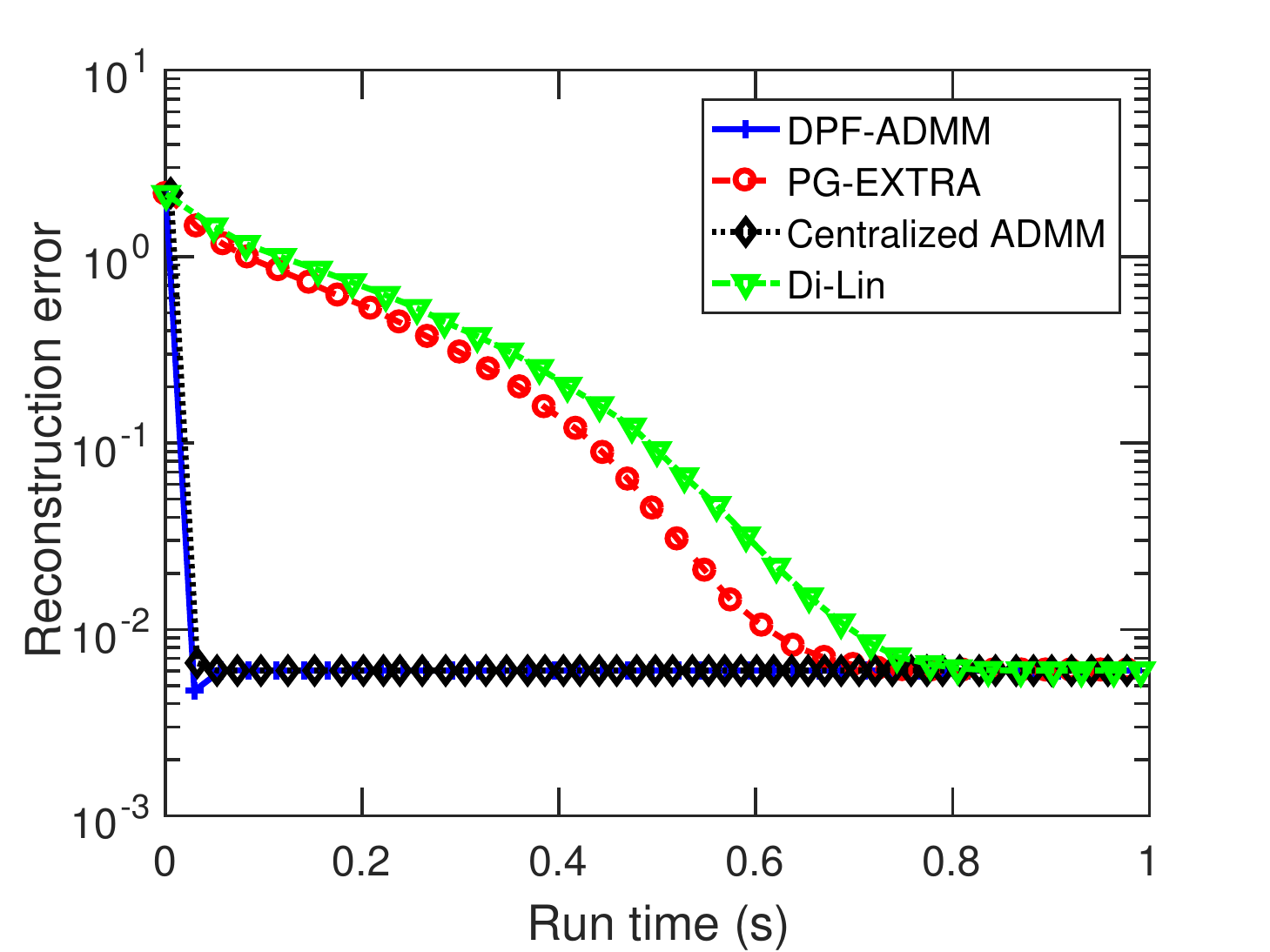}} \hfil
    \subfigure[Fully connected graph.]{\includegraphics[width=2.3in]{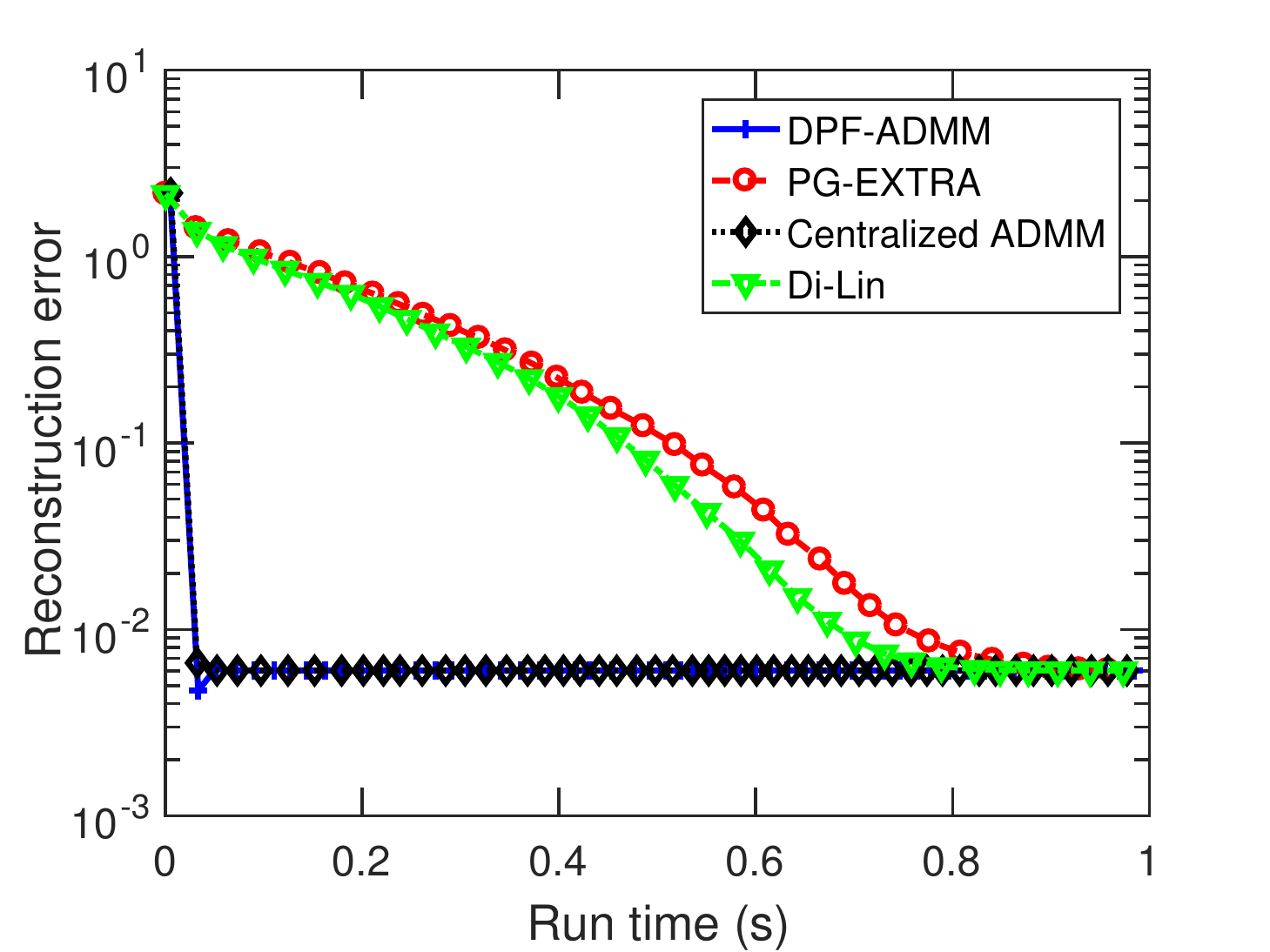}}
    \caption{
    $\ell_2$+$\ell_1$ compressed sensing: reconstruction errors vs. run time for respective algorithms.}
    \label{fig3}
\end{figure*}

\subsection{Smooth+nonsmooth: $\ell_1+\ell_2$ compressed sensing problem}
We first consider the following decentralized $\ell_1$+$\ell_2$
compressed sensing problem:
\begin{align}
\label{numer:1} \min_{\boldsymbol{x}} & \quad \sum_{i=1}^l
\frac{1}{{2\eta }}\left\| {{\boldsymbol{M}_{i}}{\boldsymbol{x}} -
\boldsymbol{b}_{i}} \right\|_2^2 +  {{{\left\| {\boldsymbol{x}}
\right\|}_1}}
\end{align}
where $\eta=1.2\times10^{-4}$, $\boldsymbol{b}_{i}
=\boldsymbol{M}_{i}\boldsymbol{x} +\boldsymbol{e}_{i}$ denotes the
measurements held by each node,
$\boldsymbol{M}_{i}\in{\mathbb{R}^{{m_i} \times n}}$ is the
sensing matrix used by each node, and $\boldsymbol{e}_{i}$ denotes
the associated noise vector. In our simulations, we set $m_i=3$,
$n=100$, the sparsity level of the sparse signal $\boldsymbol{x}$
is set equal to 5. Entries of the measurement matrix and the noise
vector are i.i.d Gaussian random variables. The variance of the
entry in the noise vector is set to $10^{-3}$.

We compare our proposed algorithm with the PG-EXTRA
\cite{ShiLing15b} and the distributed linearized ADMM (Di-Lin)
\cite{AybatWang18}. The PG-EXTRA and the Di-Lin are
state-of-the-art algorithms recently designed for smooth+nonsmooth
type of decentralized composite optimization problems. A
centralized ADMM (Section $7$ in \cite{BoydParikh11}) is also
included in our experiments for comparison. The centralized ADMM
assumes a star topology in which all nodes are individually
connected to a central point and there is no connection among
local nodes. The consensus for the centralized ADMM is achieved by
letting each local variable equal to an extra variable
$\boldsymbol{z}$ held by the central node, i.e.
$\boldsymbol{x}_i=\boldsymbol{z}$, $1\leq i\leq l$. In each
iteration, the local node computes a new $\boldsymbol{x}_i$ and
sends it to the central node, and the central node updates
$\boldsymbol{z}$ based on the received $\{\boldsymbol{x}_i\}$. The
updated $\boldsymbol{z}$ is then sent back to all local nodes. It
is widely acknowledged  \cite{ShiLing15b} that the centralized
ADMM usually provides better performance than decentralized
methods. Fig. \ref{fig2} plots the reconstruction errors vs. the
number of iterations for respective algorithms. Results are
averaged over $10^3$ independent runs, with the noise randomly
generated for each run. We see that our proposed method achieves a
much faster convergence rate than the PG-EXTRA and the Di-Lin: it
takes PG-EXTRA and Di-Lin more than $2000$ iterations to converge
(the PG-EXTRA and the Di-Lin achieve similar performance), whereas
only about $200$ iterations are needed for our algorithm to
converge. Such a performance improvement is primarily due to the
fact that our proposed method does not need to resort to any
proximal term which is known to slow down the convergence speed.
Notably, the proposed DPF-ADMM achieves performance almost the
same as that of the centralized ADMM. This result shows the
effectiveness and superiority of our proposed method. Also, since
our proposed method operates over a simplest bipartite graph with
a minimum number of edges, the amount of data to be exchanged
(among nodes) per iteration is considerably reduced compared with
the competing algorithms. Such a merit makes our proposed
algorithm particularly suitable for networks which are subject to
stringent power and communication constraints.

To more fairly evaluate the computational complexity of respective
algorithms, in Fig. \ref{fig3}, we depict the reconstruction
errors as a function of the average run time (in seconds). We see
that it takes less time for our proposed method to converge than
the PG-EXTRA and the Di-Lin.

\subsection{Nonsmooth+nonsmooth: $\ell_1+\ell_1$ compressed sensing problem}
Next we consider the $\ell_1$+$\ell_1$ compressed sensing problem:
\begin{align}
\label{numer:2} \min_{\boldsymbol{x}} & \quad
\sum_{i=1}^{l}\frac{1}{\eta}\| \boldsymbol{M}_i
\boldsymbol{x}-\boldsymbol{b}_i\|_1+\|\boldsymbol{x}\|_1
\end{align}
where $\eta$ is set to $1.2\times 10^{-3}$, and $\boldsymbol{M}_i$
is the same as before. Each noise vector $\boldsymbol{e}_i$ has
only one nonzero entry, which is a Gaussian random variable whose
variance is $10^{-2}$. Note that the proximal mapping of
$\|\boldsymbol{x}\|_1$ has a closed form solution. Nevertheless,
the proximal mapping of
$\|\boldsymbol{M}_i\boldsymbol{x}-\boldsymbol{b}\|_1$ does not
have a closed form solution. In this case, we need to transform
the problem (\ref{numer:2}) into a form that can be handled by our
proposed method. We introduce an auxiliary variable to
(\ref{numer:2}):
\begin{align}
\label{numer:3} \min_{\boldsymbol{x}, \boldsymbol{u}} &
\sum_{i=1}^{l}\frac{1}{\eta}\|\boldsymbol{u}\|_1
+\phi(\boldsymbol{u}-
\boldsymbol{M}_i\boldsymbol{x}-\boldsymbol{b}_i)+
\|\boldsymbol{x}\|_1
\end{align}
where $\phi(\boldsymbol{x})$ is an indicator function, whose value
is $+\infty$ if $\boldsymbol{x}\neq\boldsymbol{0}$, and
$\boldsymbol{0}$ otherwise. Let
$f_i(\boldsymbol{x};\boldsymbol{u})=\frac{1}{\eta}\|
\boldsymbol{u}\|_1+\phi(\boldsymbol{u}-\boldsymbol{M}_i\boldsymbol{x}
-\boldsymbol{b}_i)$ and
$g_i(\boldsymbol{x})=\|\boldsymbol{x}\|_1$, then the problem
(\ref{numer:3}) can be written as
\begin{align}
\min_{\boldsymbol{x},\boldsymbol{u}} & \quad
\sum_{i=1}^{l}f_i(\boldsymbol{x};\boldsymbol{u})+g_i(\boldsymbol{x})
\label{numer:4}
\end{align}
which is similar to (\ref{opt1}). Following the derivation in
Section \ref{sec:PF-ADMM-composite}, the decentralized formulation
of the problem (\ref{numer:4}) can be described as
\begin{align}
\mathop {\min }\limits_{{\tilde{\boldsymbol{x}}}_f,
{\tilde{\boldsymbol{u}}}_f \ {\tilde{\boldsymbol{y}}}_g} & \quad
\tilde{f}({\tilde{\boldsymbol{x}}}_f;{\tilde{\boldsymbol{u}}}_f)+
\tilde{g}({\tilde{\boldsymbol{y}}}_g) \nonumber\\
\text{s.t.} & \quad \boldsymbol{C}_H{\tilde{\boldsymbol{x}}}_f
+\boldsymbol{C}_T{\tilde{\boldsymbol{y}}}_g=\boldsymbol{0}
\label{numer:5}
\end{align}
where all the notations except ${\tilde{\boldsymbol{u}}}_f^T=
[{\tilde{\boldsymbol{u}}}_H^T \ {\tilde{\boldsymbol{u}}}_T^T]^T$
is the same as those in Section \ref{sec:PF-ADMM-composite}. Note
that the variable ${\tilde{\boldsymbol{u}}}_f$ is kept locally by
each node and does not participate in the information exchange. It
is difficult to handle the indicator function and the $\ell_1$
norm in $\tilde{f}$ simultaneously, as a compromise, we choose to
rewrite the indicator function as an equality constraint.
Attaching an additional Lagrangian multiplier to the this equality
constraint, now we can apply our proposed DPF-ADMM to solve
(\ref{numer:5}).

Most existing decentralized algorithms are designed exclusively
for solving smooth+nonsmooth problems, including the PG-EXTRA.
Thus, we choose to compare our proposed algorithm with the DGD
\cite{NedicOzdaglar09} and the centralized ADMM. For the competing
algorithms, the parameters are tuned to achieve the best
performance. Fig. \ref{fig4} plots reconstruction errors vs. the
number of iterations for respective algorithms. Results are
averaged over $10^{4}$ runs, with the noise randomly generated for
each run. We see that the our proposed DPF-ADMM presents a
significant performance advantage over the DGD in terms of both
reconstruction accuracy and convergence speed. Due to the simple
structure, the computation time (per iteration) of the DGD is
about half of that of our proposed algorithm. Nevertheless, the
DPF-ADMM still exhibits an overwhelming advantage over the DGD in
terms of the computational complexity, see Fig. \ref{fig5}.

\begin{figure*}[t]
    \centering
    \subfigure[Random graph.]{\includegraphics[width=2.3in]{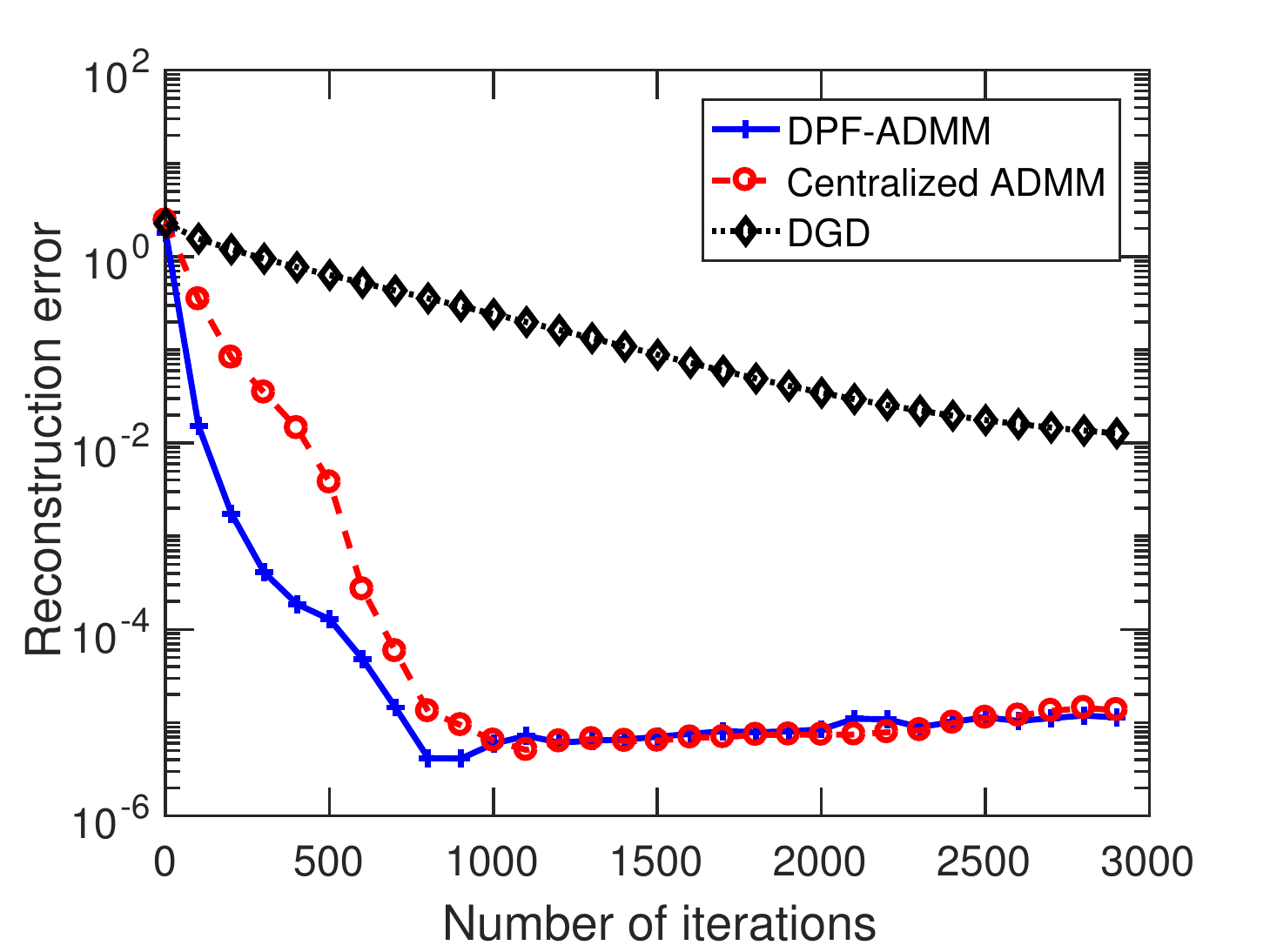}} \hfil
    \subfigure[Line graph.]{\includegraphics[width=2.3in]{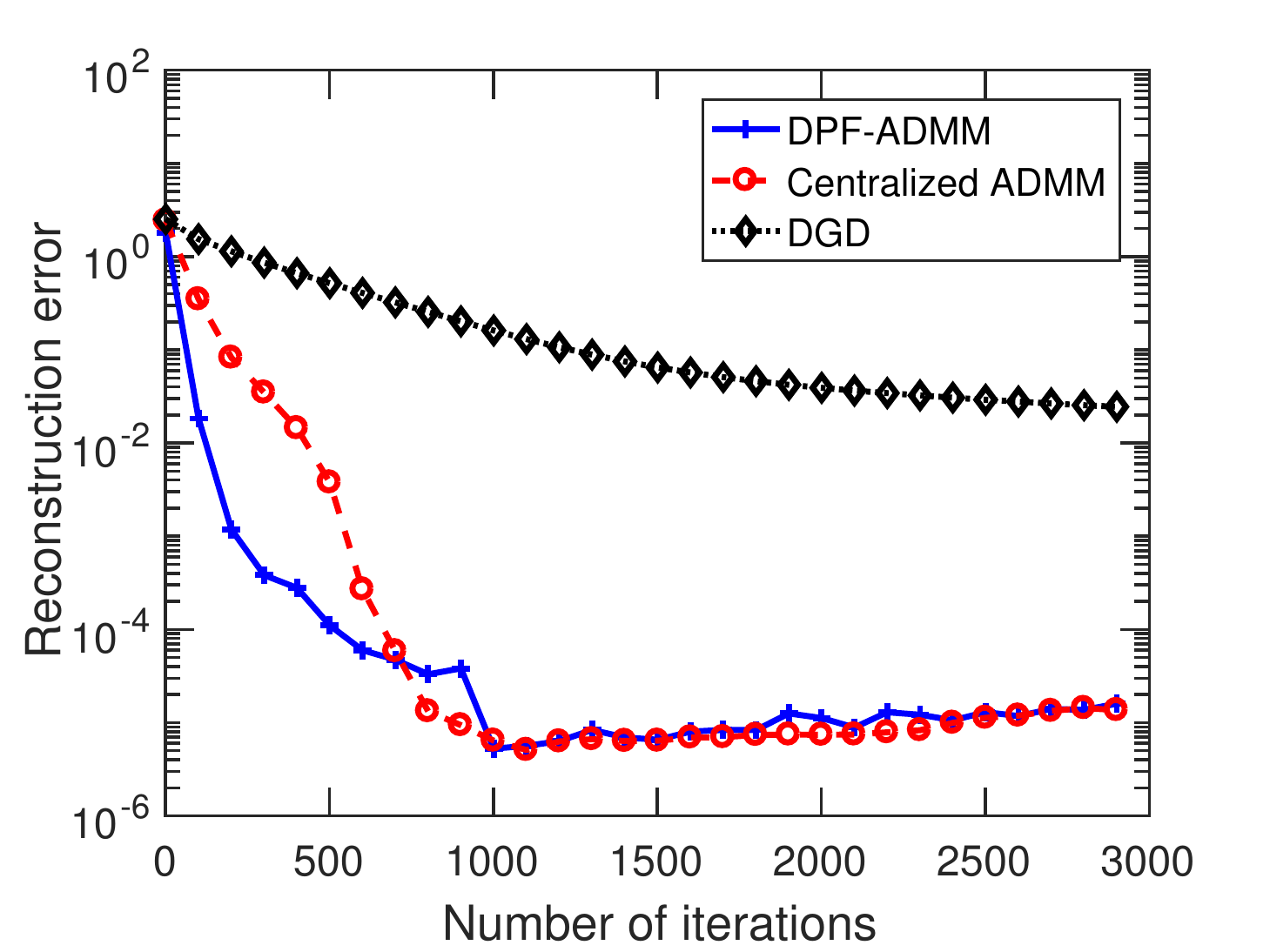}} \hfil
    \subfigure[Fully connected graph.]{\includegraphics[width=2.3in]{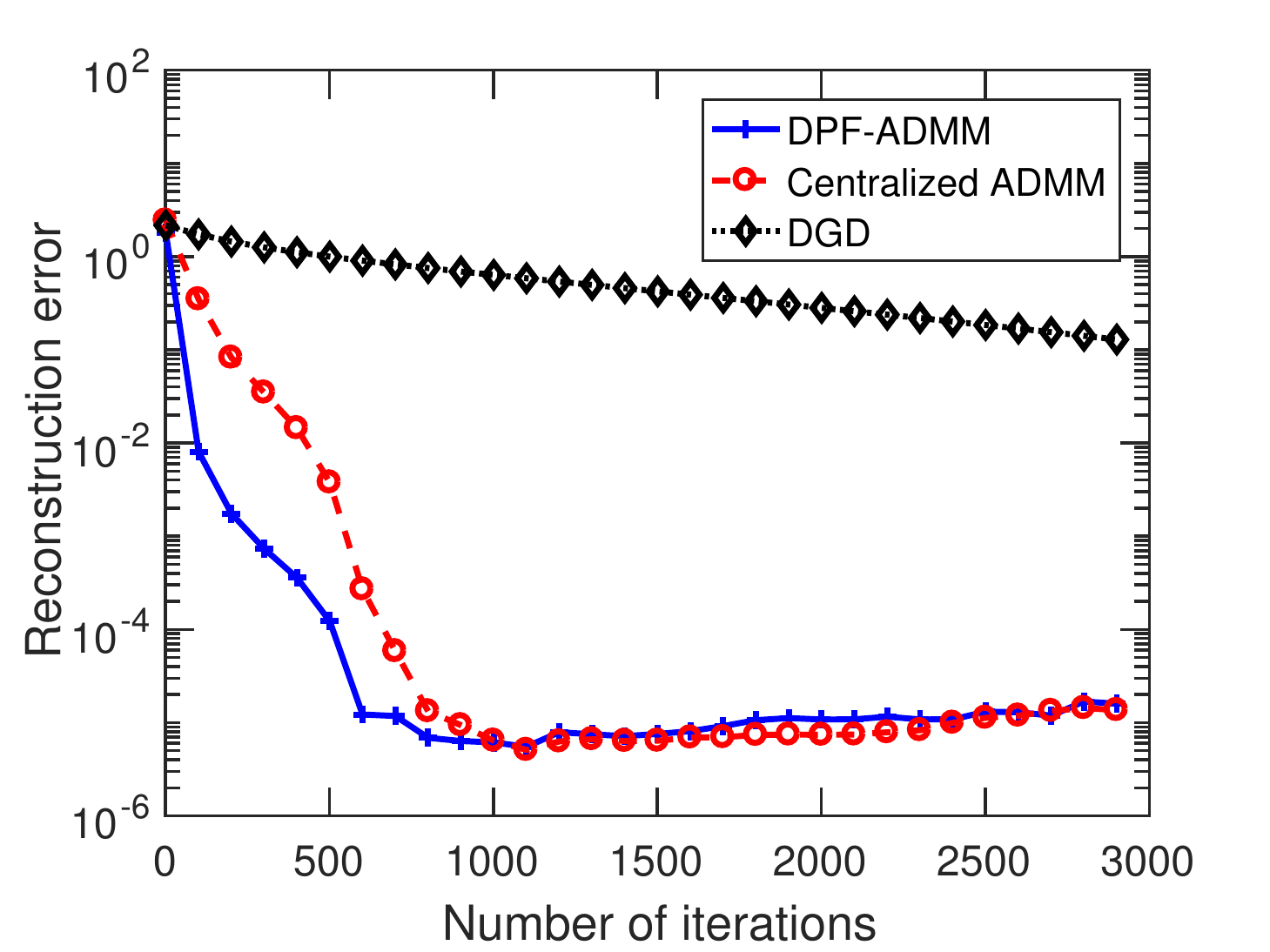}}
    \caption{
    $\ell_1+\ell_1$ compressed sensing: reconstruction errors vs. the number of iterations for respective algorithms.}
    \label{fig4}
\end{figure*}

\begin{figure*}[t]
    \centering
    \subfigure[Random graph.]{\includegraphics[width=2.3in]{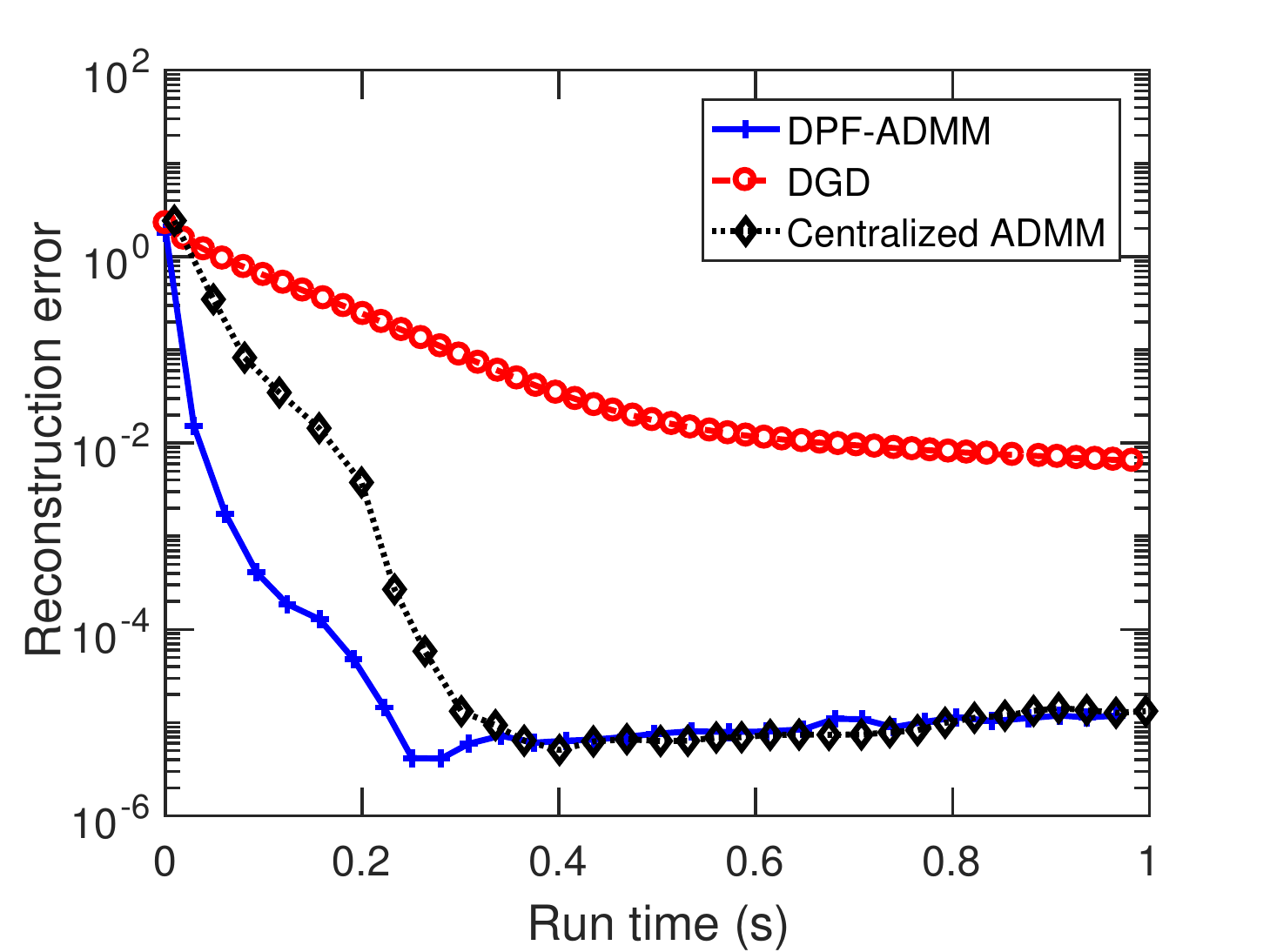}} \hfil
    \subfigure[Line graph.]{\includegraphics[width=2.3in]{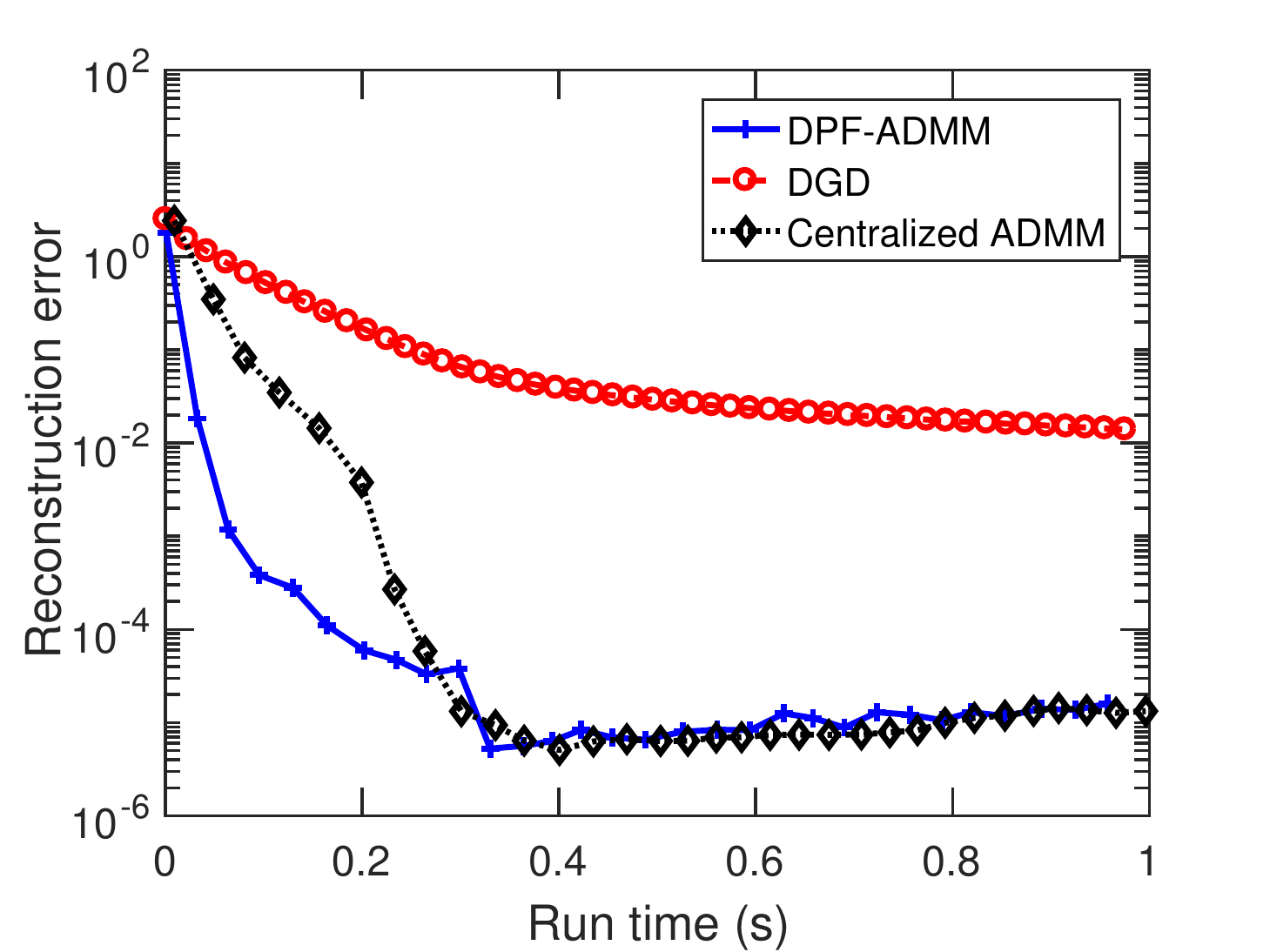}} \hfil
    \subfigure[Fully connected graph.]{\includegraphics[width=2.3in]{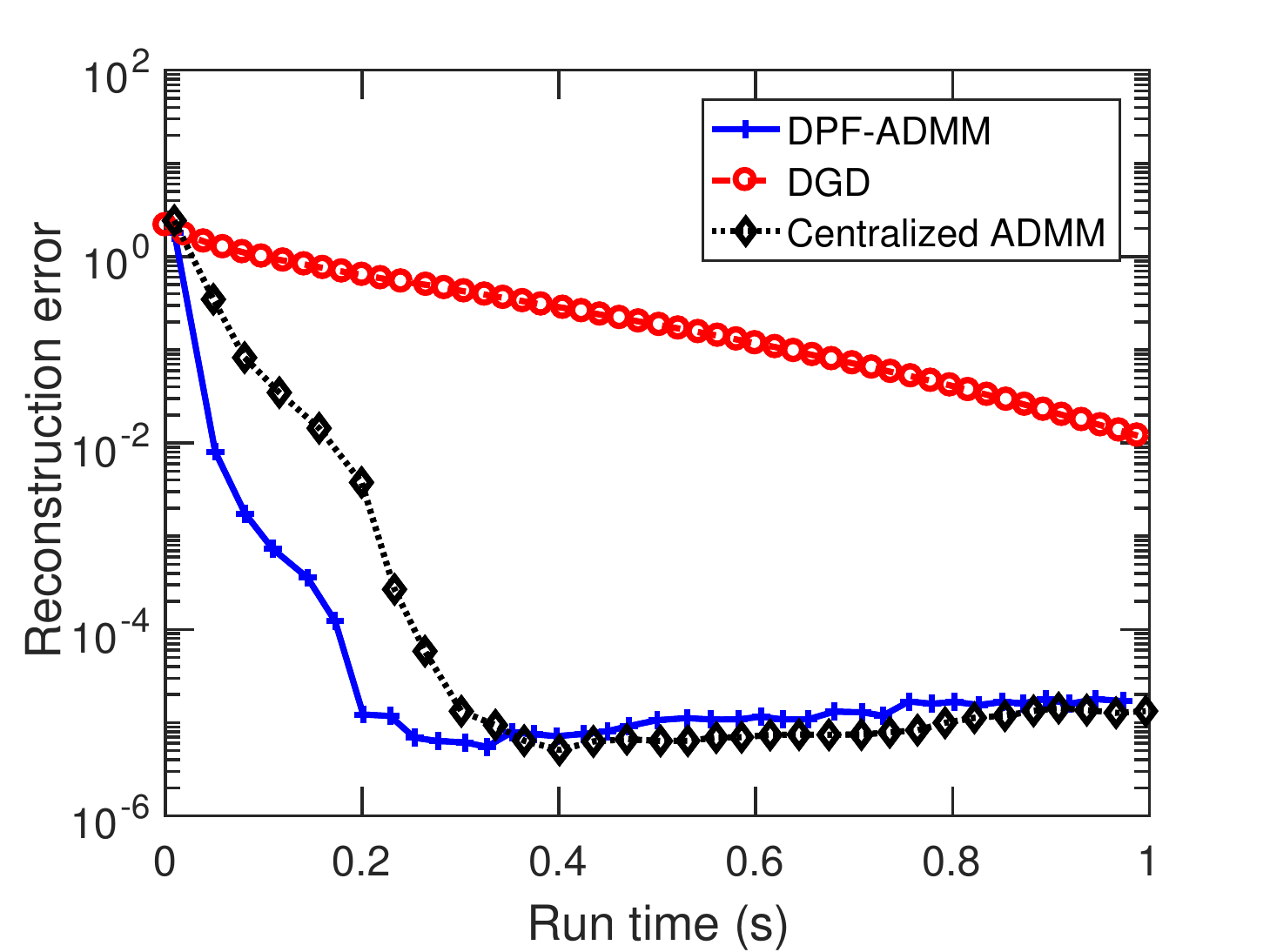}}
    \caption{
    $\ell_1+\ell_1$ compressed sensing: reconstruction errors vs. average run time for respective algorithms.}
    \label{fig5}
\end{figure*}

\section{Conclusion} \label{sec:conclusion}
In this paper, we proposed a decentralized proximal-free ADMM for
solving decentralized composite optimization problems. The
proposed algorithm was developed based on a simplest bipartite
graph, which is defined as a bipartite graph that has a minimum
number of edges to keep the graph connected. We showed that the
simplest bipartite graph has some interesting properties that can
be utilized to develop a decentralized algorithm without involving
any proximal terms. The proposed algorithm exhibits a much faster
convergence speed than state-of-the-art decentralized algorithms.
Notably, it achieves performance similar to that of the
centralized ADMM, which is known to provide the best achievable
performance for all decentralized methods. In addition, the
proposed algorithm entails a minimal communication cost because it
runs on a simplest bipartite graph, which has the minimum number
of edges that keeps the graph connected.

\useRomanappendicesfalse
\appendices

\section{Proof of Lemma \ref{lemma2}} \label{appA}
To prove that the matrix $\boldsymbol{X}$ has full row rank, we
only need to show that the dimension of the null space of
$\boldsymbol{X}^T$ is equal to one because $\boldsymbol{X}$ is an
$(l-1)\times l$ matrix. From Lemma \ref{lemma1}, we know that each
row of $\boldsymbol{X}$ has only two nonzero elements that are of
equal magnitude but opposite signs. Therefore it is clear that an
all-one column vector $\boldsymbol{1}$ lies in the null space of
$\boldsymbol{X}^T$, i.e.
$\boldsymbol{X}\boldsymbol{1}=\boldsymbol{0}$. On the other hand,
in the following, we show that for any vector
$\boldsymbol{z}\notin \text{span}\{ \boldsymbol{1}\}$, we have
$\boldsymbol{X}\boldsymbol{z}\neq\boldsymbol{0}$.

For any vector $\boldsymbol{z}\notin \text{span}\{
\boldsymbol{1}\}$, we can always find two entries of
$\boldsymbol{z}$, say, $z_i$ and $z_j$, that are nonidentical,
i.e. $z_i\neq z_j$. Suppose there is an edge between node $i$ and
node $j$ in the simplest bipartite graph, and let
$\boldsymbol{x}^T$ denote the row in $\boldsymbol{X}$
corresponding to this edge. According to the results in Lemma
\ref{lemma1}, we know that the $i$th and $j$th entries of
$\boldsymbol{x}^T$ have a magnitude of $1$ but opposite signs,
whereas other entries equal to zero. In this case, we have
$\boldsymbol{x}^T\boldsymbol{z}\neq 0$, which implies
$\boldsymbol{X}\boldsymbol{z}\neq\boldsymbol{0}$. Now consider the
case where there is no direct link (i.e. edge) between node $i$
and node $j$. We first assume
$\boldsymbol{X}\boldsymbol{z}=\boldsymbol{0}$ and proceed our
proof by contradiction. Since the simplest bipartite graph is a
connected graph, there always exists a route to connect these two
nodes. Suppose there are $s$ intermediate nodes, say node $k_1$,
$\ldots$, $k_s$, between node $i$ and node $j$. Since there is an
edge between node $i$ and node $k_1$, its corresponding row in
$\boldsymbol{X}$, denoted as $\boldsymbol{x}_{t_1}^T$, has two
nonzero entries (i.e. the $i$th entry and the $k_1$th entry) of
equal magnitude but opposite signs. From
$\boldsymbol{X}\boldsymbol{z}=\boldsymbol{0}$, we can arrive at
$z_i=z_{k_1}$. Following a similar derivation, we can establish a
sequence of equations: $z_{k_1}=z_{k_2},\ldots,z_{k_s}=z_j$.
Combining these equations, we eventually reach the conclusion that
$z_i=z_j$, which contradicts our original assumption $z_i\neq
z_j$. Therefore $\boldsymbol{X}\boldsymbol{z}=\boldsymbol{0}$
cannot be true. In other words, for any vector
$\boldsymbol{z}\notin \text{span}\{ \boldsymbol{1}\}$, we have
$\boldsymbol{X}\boldsymbol{z}\neq\boldsymbol{0}$, which implies
that the dimension of the null space of $\boldsymbol{X}^T$ is
equal to one. Hence $\boldsymbol{X}$ has full row rank.

\bibliography{main}
\bibliographystyle{IEEEtran}
\end{document}